\newcommand{\N}{{\mathbb N}}
\newcommand{\R}{{\mathbb R}}
\newcommand{\Z}{{\mathbb Z}}
\newcommand{\uno}{{\mathbf{1}}}
\newcommand{\cero}{{\mathbf{0}}}
\newtheorem{thm}{Theorem}
\newtheorem{lemma}[thm]{Lemma}
\newtheorem{cor}[thm]{Corollary}
\newtheorem{conjecture}[thm]{Conjecture}
\newtheorem{definition}[thm]{Definition}
\newtheorem{example}[thm]{Example}
\newtheorem{rmk}[thm]{Remark}
\begin{document}

\begin{verbatim}\end{verbatim}\vspace{2.5cm}

\begin{center}
\textsc{\Large\textbf{Some advances on the set covering polyhedron of circulant matrices}}
\end{center}
\smallskip

\begin{center}
\large Silvia M.\,Bianchi\footnote{sbianchi@fceia.unr.edu.ar} \;\;\; Graciela L.\,Nasini\footnote{nasini@fceia.unr.edu.ar} \;\;\; Paola B.\,Tolomei\footnote{ptolomei@fceia.unr.edu.ar}
\end{center}

\smallskip

\begin{center}
\textit{Departamento de Matem\'atica, Facultad de Ciencias Exactas, Ingenier\'ia y Agrimensura, Universidad Nacional de Rosario, 2000 Rosario, Santa Fe, Argentina}
\medskip

\textit{and CONICET, Argentina}
\end{center}

\medskip
\begin{abstract}
Working on the set covering polyhedron of consecutive ones circulant matrices, Argiroffo and Bianchi found a class of 
facet defining inequalities, induced by a particular family of circulant minors. In this work we extend these results to 
inequalities associated with every circulant minor. We also obtain polynomial separation algorithms for particular classes of such inequalities.   

\medskip

\noindent Keywords: \textit{circulant matrix\;\; set covering polyhedron\;\; separation routines}
\end{abstract}

\section{Introduction}\label{intro}

The well-known concept of domination in graphs was introduced by Berge \cite{Be} in 1962, modelling many utility location problems in operations research. 

Given a graph $G=(V,E)$ a \emph{dominating set} is a subset $D\subset V$ such that every node outside $D$ is adjacent to at least one node in $D$. Given a cost vector $w\in \R^{|V|}$, the \emph{Minimum Weight Dominating Set Problem} (MWDSP for short), consists in finding a dominating set $D$ such that $\sum_{{v}\in D} w_{v}$ is minimum. 
MWDSP arises in many applications. We can mention the strategic placement of men or pieces on the nodes of a network. As example, consider a computer network in which one wishes to choose a smallest set of computers that are able to transmit messages to all the remaining computers \cite{24}. 
Many other interesting examples include sets of representatives, school bus routing, $(r,d)$-configurations, radio stations, social network theory, kernels of games, etc. \cite{19}.

The MWDSP is NP-hard for general graphs and has been extensively investigated from an algorithmic point of view (\cite{7,10,14,15} among others), 
The cardinality version (that is when the weights are 0 and 1) has been shown to be polynomially solvable in several classes of graphs such as cactus graphs \cite{20} and the class of series-parallel graphs \cite{23}. 

However, a few results on the MWDSP derived from the polyhedral point of view are known.
An interesting result in this context can be found in \cite{Mah}, 
working on the problem when the underling graph is a cycle.

Actually, the MWDSP corresponds to particular instances of the Mimimum Weighted Set Covering Problem (MWSCP).  

Indeed, given an $m\times n$ $0,1$ matrix $A$, a \emph{cover of} $A$ is a vector $x\in \{0,1\}^n$ such that $Ax\geq \uno$, where $\uno$ is the vector with all components at value one. 
Given a cost function $w\in \mathbb R^n$, the Mimimum Weighted Set Covering Problem (MWSCP) consists in
solving the integer program 
\[
\min\{wx:Ax\geq \mathbf 1,x\in \{0,1\}^n\}.
\]

This is equivalent to solve the problem 

\[
\min\{wx:x\in Q^*(A)\}
\]
where $Q^*(A)$ is the convex hull of points in $\{x\in \{0,1\}^n: Ax\geq \mathbf 1\}$. The set $Q^*(A)$ is usually called the \emph{set covering polyhedron} associated with $A$.

In particular, given a graph $G=(V,E)$, if $A$ is a matrix such that each row corresponds to the characteristic vector of the closed neighborhood of a node $v\in V$, (i.e., $A$ is the \emph{closed neighborhood matrix of} $G$) then 
every cover of $A$ is the characteristic vector of a dominating set of $G$ and conversely.
Therefore, solving the MWSCP on $A$ is equivalent to solve the MWDSP on $G$.

It is easy to see that the closed neighborhood matrix of a cycle is a \emph{circulant} matrix. Hence, the findings in \cite{Mah} corresponds to obtain the complete description of the set covering polyhedron for the $0,1$ $n\times n$ matrices having three consecutive ones per row, known as the family of circulant matrices $C^3_n$.

In general, the closed neighborhood  of a \emph{web} graph is a circulant matrix. Web graphs have been thoroughly studied in the literature (see \cite{Sting,Tro,Wa}).

The main goal of this work is the study of the MWSCP on circulant matrices and its direct consequences on the MWDSP when the underlying graph is a web graph. 

Previous results on the set covering polyhedron of circulant matrices can be found in \cite{Nes,circu,CN,NS,Sa}.

In section \ref{sec2} of this work, we present basic definitions and preliminaries needed for the remaining sections. In section \ref{sec3} we introduce a family of valid inequalities for  the set covering polyhedron of circulant matrices.
We obtain sufficient conditions that make a valid inequality a facet of the polyhedron. We also conjecture that this condition is also necessary. In section \ref{sec4} we prove that a subfamily of the inequalities presented in section \ref{sec3} can be separated in polynomial time.

A preliminary version of this work appeared without proofs in \cite{lagos}.

\section{Definitions, notations and preliminary results}\label{preliminar}\label{sec2}

In what follows, every time we state $S\subset \Z_n$ for some $n\in \N$, we consider $S\subset \{0,\dots,n-1\}$ and the addition between the elements of $S$ is taken modulo $n$. 

\medskip

Given a set $F$ of vectors in $\{0,1\}^n$, we say $y\in F$ is a dominating vector (of $F$) if there exits $x\in F$ such that $x\leq y$. It can be also said that $x$ is dominated by $y$. 

From now on, every matrix has $0,1$ entries, no zero columns and no dominating rows. If $A$ is such an $m\times n$ matrix, its rows and columns are indexed by $\Z_m$ and $\Z_n$ respectively. 
Two matrices $A$ and $A'$ are \emph{isomorphic} and we denote $A\approx A'$, if $A'$ can be obtained from $A$ by permutation of rows and columns.

If $S\subset \Z_m$ and $T\subset \Z_n$, let $A_{S,T}$ be the submatrix of $A$ with entries $a_{ij}$ where $i\in S$ and $j\in T$.

Given $N\subset \Z_n$, 
let us denote by $R(N)=\{ j\in\Z_m: j$ is a dominating row of $A_{\Z_m, \Z_n-N}\}$.
A \emph{minor of} $A$ \emph{obtained by contraction of} $N$ and denoted by $A/N$, is the matrix $A_{\Z_m-R(N), \Z_n-N}$. In this work, when we refer to \emph{a minor} of $A$ we are always considering a minor obtained by contraction. 

\medskip

Observe that, there exists a one-to-one correspondence between a vector $x\in \{0,1\}^n$ and the subset $S_x\subset \Z_n$ whose characteristic vector is $x$ itself.
Hence, we agree to abuse of notation by writing $x$ instead of $S_x$. In this way, if $x\in \{0,1\}^n$, we write $i\in x$ meaning that $x_i=1$. Also, if $x$ is dominated by $y\in \{0,1\}^n$ then we write $x\subset y$.

\medskip

Remind that a \emph{cover} of a matrix $A$ is a vector $x\in \{0,1\}^n$ such that $Ax\geq \uno$.
In addition, the \emph{cardinality} of a cover $x$ is denoted by $\left|x\right|$ and equals $\uno x$. A cover $x$ is \emph{minimum} if it has the minimum cardinality and in this case $\left|x\right|$ is called the \emph{covering number} of the matrix $A$, denoted by $\tau(A)$.
Observe that every cover of a minor of $A$ is a cover of $A$ and then, for all $N\subset \Z_n$, it holds that $\tau(A/N)\geq \tau(A)$.

\medskip

Recall that the set covering polyhedron of $A$, denoted by $Q^*(A)$, is defined as the convex hull of its covers. The polytope $Q(A)=\{x\in [0,1]^n: Ax\geq \uno\}$ is known as the \emph{linear relaxation of} $Q^*(A)$.
When $Q^*(A)=Q(A)$ the matrix $A$ is ideal and the MWSCP can be solved in polynomial time (in the size of $A$).

\medskip

Given $n$ and $k$ with $2\leq k\leq n-2$, 
for every $i\in \Z_n$ let $C^i=\{i,i+1,\ldots,i+(k-1)\}\subset \Z_n$.
The \emph{circulant} matrix $C_n^{k}$ is the square matrix whose $i$-th row is the incidence vector of $C^i$. 
Observe that, for $j\in \Z_n$, the $j$-th column of $C_n^k$ is the incidence vector of $C^{j-k+1}$. 

We say that a minor of $C_n^k$ is a \textit{circulant minor} if it is isomorphic to a circulant matrix.

\begin{rmk}\label{cover}
Let $C_n^k$ be a circulant matrix and let $x=\{i_j: j\in \Z_r\}\subset \Z_n$ with $0\leq i_0<i_1<\ldots<i_{r-1}\leq n-1$.
The following propositions are equivalent:
\begin{enumerate}
\item[(i)] 
$x$ is a cover of $C_n^k$,
\item[(ii)] 
$i_{j+1}-1\in C^{i_j}$ for all $j\in \Z_r$,
\item[(iii)]
$i_{j-1}\in C^{i_j-k}$ for all $j\in \Z_r$.
\end{enumerate}
\end{rmk}

It is no hard to see that $\tau(C_n^k)\geq\left\lceil \frac{n}{k}\right\rceil$. Moreover, for every $i\in \Z_n$   
$$x^i=\left\{i+hk : 0\leq h \leq \left\lfloor \frac{n}{k}\right\rfloor\right\}\subset \Z_n$$ 
is a cover of $C_n^k$ of size $\left\lceil \frac{n}{k}\right\rceil$, and then $\tau(C_n^k)=\left\lceil \frac{n}{k}\right\rceil$. 

Also, the set $\{x^i: i\in \Z_n\}$ is linearly independent if and only if $n$ is not multiple of $k$. Thus the inequality  $\sum_{i=1}^n x_i\geq \left\lceil \frac{n}{k}\right\rceil$ that is always  valid for $Q^*(C_n^k)$, defines a facet if and only if $n$ is not a multiple of $k$ (see \cite{Sa}).
This inequality will be called the \emph{rank constraint}.

In addition, for every $i\in \Z_n$, the constraints $x_i\geq 0$ and $\sum_{j\in C^i} x_j \geq 1$ are facet defining inequalities of $Q^*(C_n^k)$ (see \cite{BNg,Sa} for further details). We call them \emph{boolean facets}. 

It is also known that if $a x \geq \beta$ is a non boolean facet defining inequality of $Q^*(C_n^k)$ then $a>\cero$ \cite{circu}.

\medskip

Ideal circulant matrices have been completely identified by Cornu\'ejols et al. in \cite{CN}. Many of the ideas and results obtained in this seminal paper inspired further results presented in this work.

In fact, the authors in \cite{CN} characterize ideal circulant matrices in term of a nonideal circulant minor and give sufficient conditions for a subset $N\subset \Z_n$ to ensure that $C_n^k/N$ is a circulant minor.  These conditions are obtained in terms of simple dicycles in a particular digraph.

Indeed, given $C^{k}_n$, the digraph $G(C^{k}_n)$ has vertex set $\Z_n$ and $(i,j)$ is an arc of $G(C^{k}_n)$ if $j\in \{i+k,i+k+1\}$. In this way, we will say that an arc $(i,i+k)$ has length $k$ and an arc $(i,i+k+1)$ has length  $k+1$.

If $D$ is a simple dicycle in $G(C^{k}_n)$, and $n_2$ and $n_3$ denote the number of arcs of length $k$ and $k+1$ respectively, then there must be a positive integer  $n_1\geq 1$ such that
$n_1 n = k n_2 + (k+1) n_3$ and $\gcd(n_1,n_2,n_3)=1$ ($\gcd$ means greatest common divisor). Moreover, the conditions  $n_1 n = k n_2 + (k+1) n_3$ and $\gcd(n_1,n_2,n_3)=1$ are not only necessary but also sufficient for the existence of a simple dicycle in $G(C^{k}_n)$ (see \cite{Nescirc} for further details).

We say that $n_1, n_2$ and $n_3$ are the \emph{parameters associated with} the dicycle.

Later, Aguilera in \cite{Nes} completely characterized subsets $N$ of $\Z_n$ for which $C_n^k/N$ is a circulant minor  in terms of dicycles in the digraph $G(C^{k}_n)$. We rewrite theorem 3.10 of \cite{Nes} in the following way:

\begin{thm} \label{minorsgrafo}
Let $n,k$ be positive integers verifying $2\leq k \leq n-2$ and let $N\subset \Z_n$.
Then, the following are equivalent:
\begin{enumerate}
\item[(i)] $C_n^k/N\approx C_{n'}^{k'}$.  
\item[(ii)] $N$ induces 
$d$ disjoint simple dicycles $D_0,\ldots,D_{d-1}$ in $G(C^{k}_n)$, each of them having the same parameters $n_1$, $n_2$ and $n_3$ such that 
$n=n'-d(n_2+n_3)$ and $k'=k-dn_1$.
\end{enumerate}
\end{thm}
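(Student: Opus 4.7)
The plan is to translate the contraction operation on $C_n^k$ into a local property of $N\subseteq\Z_n$ that is read off directly from the arcs of $G(C_n^k)$. The starting observation is that, since $C^{i+1}\setminus C^i=\{i+k\}$ and $C^{i-1}\setminus C^i=\{i-1\}$, row $i$ of $C_n^k$ becomes dominated by one of its immediate neighbours in the minor exactly when $i-1\in N$ or $i+k\in N$; these two conditions identify respectively the arcs of $G(C_n^k)$ of length $k$ and $k+1$ whose tail is $i-1$. Following these arcs through $N$ is what assembles the dicycles.

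For $(\mathrm{ii})\Rightarrow(\mathrm{i})$, I would begin with a counting lemma: for a single simple dicycle $D$ with parameters $(n_1,n_2,n_3)$, the value $|C^i\cap D|$ belongs to $\{n_1-1,n_1\}$ and equals $n_1-1$ at exactly $n_3$ indices $i\in\Z_n$. Summing over the $d$ disjoint dicycles sharing these parameters, the function $i\mapsto|C^i\cap N|$ attains its maximum $d n_1$ on an index set which, after identifying duplicates among the corresponding row supports, comprises $n-d(n_2+n_3)$ non-dominating rows of support size $k-d n_1$. Enumerating the surviving columns in the cyclic order of $\Z_n\setminus N$ then exhibits the isomorphism $C_n^k/N\approx C_{n'}^{k'}$ with $n'=n-d(n_2+n_3)$ and $k'=k-d n_1$.

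For $(\mathrm{i})\Rightarrow(\mathrm{ii})$, starting from $C_n^k/N\approx C_{n'}^{k'}$, the constant row weight $k'$ of $C_{n'}^{k'}$ forces $|C^i\cap N|=k-k'$ for every non-dominating row $i$. Walking cyclically through $\Z_n$ and recording, for each $v\in N$, the unique out-arc $(v,v+k)$ or $(v,v+k+1)$ whose head lies in $N$, one obtains a set of arcs of $G(C_n^k)$ partitioning $N$ into closed walks. The rigidity imposed by the circulant minor forces these walks to be $d$ disjoint simple dicycles of common parameters $(n_1,n_2,n_3)$ with $d(n_2+n_3)=n-n'$ and $d n_1=k-k'$, while simplicity of each dicycle yields $\gcd(n_1,n_2,n_3)=1$.

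The main obstacle is the counting lemma at the heart of direction $(\mathrm{ii})\Rightarrow(\mathrm{i})$: showing $|C^i\cap D|\in\{n_1-1,n_1\}$ for a simple dicycle $D$ requires lifting $D$ to a near-arithmetic walk on $\Z$ with steps $k$ and $k+1$ of total length $n_1 n$, and then using $\gcd(n_1,n_2,n_3)=1$ together with $n_1 n=k n_2+(k+1)n_3$ to rule out larger or smaller fluctuations. In the converse direction, the parallel difficulty is showing that the closed walks extracted from $N$ must share \emph{identical} parameters across all $d$ components, which again rests on the homogeneity enforced by the circulant structure of the minor.
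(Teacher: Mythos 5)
First, a point of reference: the paper does not prove this theorem at all --- it is a restatement of Theorem 3.10 of Aguilera \cite{Nes} (building on \cite{CN,Nescirc}) --- so there is no internal proof to compare with, and a complete argument would essentially have to reconstruct Aguilera's. Measured against that standard, what you have written is a plan whose two decisive steps are left open, as you yourself acknowledge. The counting lemma ($|C^i\cap D|\in\{n_1-1,n_1\}$, with the smaller value at exactly $n_3$ indices) is precisely the content of Lemma \ref{genb}, which this paper also imports from \cite{Nes,circu} rather than proving; and the claim that the closed walks extracted from $N$ in the converse direction form $d$ disjoint simple dicycles with one common parameter triple is the other half of the theorem. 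Declaring both to be ``the main obstacle'' means neither implication is actually proved in the proposal.

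There is moreover a concrete flaw in your extraction step for (i) $\Rightarrow$ (ii): for $v\in N$ there need not be a \emph{unique} out-arc of $G(C^k_n)$ with head in $N$, so ``recording, for each $v\in N$, the unique out-arc $(v,v+k)$ or $(v,v+k+1)$ whose head lies in $N$'' is not well defined (nor is it argued that at least one of the two heads lies in $N$). Take the paper's own example of a $2$-alternated minor of $C^6_{33}$: here $N=N^0\cup N^1$ with $N^0=\{0,7,14,21,27\}$ and $N^1=\{1,8,15,22,28\}$, and for $v=1$ both $1+k=7$ and $1+k+1=8$ lie in $N$; the correct decomposition must use the arc $(1,8)$, and nothing in your recipe selects it --- a greedy choice of $(1,7)$ does not return a decomposition of $N$ into disjoint simple dicycles. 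This interleaving phenomenon is exactly what makes the theorem nontrivial, so the dicycle decomposition has to be argued, not read off locally. On the other side, in (ii) $\Rightarrow$ (i) the phrase ``after identifying duplicates among the corresponding row supports'' hides two facts that must be established: that the deleted rows are exactly $\{i+1: i\in N\}$ (Remark \ref{gena}, including how equal rows are resolved), and that the $n'$ surviving supports, which are indeed cyclic intervals of length $k'$ in $\Z_n-N$, are pairwise distinct, so that each of the $n'$ possible intervals occurs exactly once and the minor really is $C^{k'}_{n'}$. The architecture you chose is the right one and matches the known route, but until the counting lemma is proved and the decomposition and distinctness issues are settled, the essential content of the theorem remains unproved.
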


Thus, whenever we refer to a circulant minor of $C_n^k$ with parameters $d$, $n_1$, $n_2$ and $n_3$, we are referring to the non negative integers %
whose existence is guaranteed by the previous theorem. 
In addition, $N^j$, with $j\in \Z_d$ refers to each of the subsets inducing a simple dicycle $D^j$ in $G(C^k_n)$. Moreover,  we call $W^j=\{i\in N^j: i-(k+1)\in N^j\}$, for $j\in \Z_d$ and $W=\cup_{j\in\Z_d} W^j$. Then, $\left|W^j\right|=n_3$ and $\left|N^j\right|=n_2+n_3$ for all $j\in \Z_d$.

Observe that, the parameters $d$, $n_1$, $n_2$ and $n_3$ 
are not enough to identify the minor itself. 
For example, $C_{9}^4$ has nine different minors with parameters $d=n_1= n_2=n_3=1$. Indeed,  for every $i\in \Z_9$, $C_{9}^4/\{i,i+4\}\approx C_7^3$. 

Let us remark that starting from $W\subset \Z_n$ corresponding to a circulant minor $M$ of $C_n^k$ we can obtain the set $N\subset \Z_n$ such that $M\approx C^k_n/N$. 
In order to see this, it is enough to observe that, given $j\in W$, we can construct the set $N^j$ inducing the simple dicycle in $G(C_n^k)$ with $j\in N^j$. Indeed, let $N^j:= \{j,j-(k+1)\}$ and $i=j-(k+1)$. While $i\neq j$ we repeat the next step:
if $i\in W$ then we add $i-(k+1)$ to $N^j$ and set $i:=i-(k+1)$  else we add $i-k$ to $N^j$ and set $i:=i-k$.
Once we obtain $N^j$, it is clear that we also obtain the parameters $n_1,n_2$ and $n_3$ associated with the dicycle induced by $N^j$. Also, considering $|W|=dn_3$, we can obtain the parameter $d$. Hence, we compute $n'=n - d (n_2+n_3)$ and $k'= k-d n_1$.  

So, in what follows, we usually refer to a circulant minor
\emph{defined by}  $W\subset \Z_n$. We will also refer to \emph{the dicycle of} $G(C^k_n))$ \emph{induced by} $W^j$, considering the dicycle induced by the corresponding subset $N^j$.

\begin{rmk} \label{condW}
Let $W\subset \Z_n$.
\begin{enumerate}
	\item[(i)] If $W=\{w_i: i\in \Z_{|W|}\}$ with 
$0\leq w_0< \dots< w_{|W|-1}\leq n-1$, then $W$ defines a circulant minor with parameters $d=n_1=1$ if and only if $w_{i+1}-w_{i}=1$ (mod $k$) and $w_{i+1}-w_{i}\geq k+1$, for all $i\in \Z_{|W|}$.
	\item[(ii)] $W$ defines a circulant minor with parameters $d\geq 2$ and $n_1=1$ if and only if $W=\cup_{j\in\Z_d} W^j$, for all $j \in \Z_d$, $W^j$ defines a circulant minor with parameters $d^j=n^j_1=1$ and for all $r,j \in \Z_d$ with $r\neq j$, $N^r\cap N^j=\emptyset$. 
\end{enumerate}
\end{rmk}

Subsets $W\subset \Z_n$ that define circulant minors play an important role in the description of the set covering polytope of circulant matrices.

\section{Relevant minor inequalities}\label{minorineq}\label{sec3}

In theorem 6.9 in \cite{circu} it is proved that given a minor of $C_n^k$ with parameter $d=1$, defined by $W$ and isomorphic to $C^{k'}_{n'}$, the inequality
\[
\sum_{i\in W} 2 x_i + \sum_{i\notin W} x_i \geq \left\lceil \frac{n'}{k'}\right\rceil
\]
is a Chv\'atal-Gomory inequality of rank at most one (when starting from $Q(C_n^k)$).

Moreover, the authors proved that if $n'= 1$ (mod $k'$) and $\left\lceil \frac{n'}{k'}\right\rceil>\left\lceil \frac{n}{k}\right\rceil$, the inequality defines a facet. 

In addition, the results in \cite{Mah} imply that these inequalities, together with the boolean facets and the rank constraint completely describe the $Q^*(C^3_n)$.

\medskip

The validity of the above inequality relies on 
lemma 6.6 and corollary 6.8 in \cite{circu}. 
Actually,  the same arguments used in the proof of lemma 6.6 in \cite{circu} are enough to prove the following result:

\begin{rmk}\label{gena}
Let $N\subset\Z_n$ be such that $C_n^k/N\approx C_{n'}^{k'}$. Then $R(N)=\{i+1: i\in N\}$.
\end{rmk}

Also, in \cite{Nes} it was proved that corollary 6.8 in \cite{circu} can be extended to:

\begin{lemma}\label{genb}
Let $N\subset\Z_n$ be such that $C_n^k/N\approx C_{n'}^{k'}$. If $W=\{i\in N: i-(k+1)\in N\}$ then for all $i\in \Z_n$, 
it holds that $\left|C^{i}-N\right|=k'+1$ if $i+k\in W$ and $\left|C^{i}-N\right|= k'$ otherwise.
\end{lemma}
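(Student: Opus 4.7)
My plan is to split the argument by whether $i\in R(N)$ and, in the dominated case, reduce the count dicycle by dicycle. For $i\notin R(N)$, i.e.\ $i-1\notin N$, Remark~\ref{gena} tells us that row $i$ survives in the minor $A_{\Z_n-R(N),\Z_n-N}\approx C_{n'}^{k'}$; hence $|C^i-N|=k'$, the common row weight of $C_{n'}^{k'}$. In addition $i-1\notin N$ forces $i+k\notin W$, since membership of $i+k$ in $W=\bigcup_{j\in\Z_d}W^j$ requires both $i+k$ and $i+k-(k+1)=i-1$ to lie in a common $N^j$. So in this easy case the statement is immediate.

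For the remaining indices $i\in R(N)$ I would write $|C^i-N|=k-\sum_{j=0}^{d-1}|C^i\cap N^j|$ and reduce to the following single-dicycle claim: for each simple dicycle $D^j$ with vertex set $N^j$ and parameters $(n_1,n_2,n_3)$,
\[
|C^i\cap N^j| \;=\; \begin{cases} n_1-1 & \text{if } i+k\in W^j,\\ n_1 & \text{otherwise.}\end{cases}
\]
The lemma then follows by summation over $j$: since the $N^j$ (hence the $W^j$) are pairwise disjoint by Theorem~\ref{minorsgrafo}, $i+k$ lies in at most one $W^j$, so $|C^i\cap N|\in\{dn_1,\,dn_1-1\}$ and $|C^i-N|\in\{k',k'+1\}$ with the prescribed dichotomy via $k-dn_1=k'$.

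To prove the single-dicycle claim I would unfold $D^j$ into a monotone walk on $\Z$: list its vertices in dicycle order as integers $a_0<a_1<\cdots<a_{n_2+n_3-1}<a_0+n_1n$, with $a_{\ell+1}-a_\ell\in\{k,k+1\}$ and total displacement $n_1 n$. The preimage of $C^i\subset\Z_n$ on this integer line consists of $n_1$ disjoint length-$k$ intervals $I_t=[i+tn,\,i+tn+k-1]$ meeting the walk. Since any two consecutive walk vertices are at distance at least $k$, each $I_t$ contains at most one $a_\ell$, giving $|C^i\cap N^j|\le n_1$. Conversely an interval $I_t$ is empty exactly when it sits inside a gap of length exactly $k$ between two consecutive walk vertices, which forces the preceding arc to have length $k+1$ and its tail $a_\ell$ to satisfy $a_\ell\equiv i-1\pmod n$; since vertices of a simple dicycle are distinct modulo $n$, at most one $I_t$ can be empty, whence $|C^i\cap N^j|\ge n_1-1$.

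The delicate step is identifying the empty-interval case with the condition $i+k\in W^j$. The geometric analysis shows $|C^i\cap N^j|=n_1-1$ iff $i-1\in N^j$ and the out-arc of $i-1$ in $D^j$ has length $k+1$, i.e.\ the arc $i-1\to i+k$ belongs to $D^j$; this clearly implies $i+k\in W^j$. For the converse one must use the simplicity of $D^j$: if $i+k\in W^j$ but $i+k$ were not the head of a length-$(k+1)$ arc, then $i-1,\,i+k-1,\,i-k,\,i+k$ would all lie in $N^j$ with arcs $i-1\to i+k-1$ and $i-k\to i+k$, and iterating this ``parallel pair'' structure would decompose $D^j$ into two vertex-disjoint sub-dicycles, contradicting the fact that $D^j$ is simple. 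Carefully carrying out this ``no parallel chain'' reduction is the obstacle I expect to be the hardest.
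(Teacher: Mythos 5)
Your first case and your unfolding of each dicycle are correct, and they pin down exactly when the count drops: for a single dicycle $D^j$ one gets $|C^i\cap N^j|=n_1-1$ precisely when the arc $(i-1,i+k)$ belongs to $D^j$, and $n_1$ otherwise. The genuine gap is the step you yourself flag as hardest, namely passing from ``$i-1$ and $i+k$ both lie in $N^j$'' to ``the arc $(i-1,i+k)$ is in $D^j$''. Your sketch of it is already off: the in-neighbours of $i+k$ in $G(C_n^k)$ are $i$ and $i-1$, not $i-k$ (an arc $i-k\to i+k$ would have length $2k$), and the ``parallel pair'' need not propagate, since the successors of a consecutive pair $u,u+1$ can drift to distance two (when $u$ takes a step of length $k$ and $u+1$ one of length $k+1$), so no splitting of $D^j$ into two dicycles follows. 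Worse, the implication is simply false, and with it the lemma in the literal form stated here. Take $n=19$, $k=4$ and the simple dicycle $0\to4\to8\to12\to16\to1\to5\to9\to14\to0$ (seven arcs of length $4$, two of length $5$, so $d=1$, $n_1=2$, $n_2=7$, $n_3=2$), and let $N=\{0,1,4,5,8,9,12,14,16\}$ be its vertex set. Then $C_{19}^4/N\approx C_{10}^{2}$, so $k'=2$, and $5\in W$ because $5-(k+1)=0\in N$; yet $C^1-N=\{2,3\}$ has size $2=k'$, not $k'+1$ (note also $|W|=4\neq n_3=2$). The deficit occurs only at $i=10$ and $i=15$, i.e.\ exactly when $i+k$ is the head of one of the two arcs of length $k+1$.

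Consequently no proof of the membership-based statement can exist; what your interval analysis actually proves --- and what the rest of the paper really uses --- is the version in which $W$ is the set of heads of the length-$(k+1)$ arcs of the dicycles induced by $N$. The two notions coincide inside a dicycle with $n_1=1$ (there distinct vertices of $N^j$ are at distance at least $k$, so $i-(k+1)\in N^j$ and $i\in N^j$ force the arc), which covers the case $d=n_1=1$ exploited later, but not in general. Moreover, since the lemma defines $W$ relative to all of $N$, for $d\geq 2$ you would also have to exclude pairs $i-1\in N^{j'}$, $i+k\in N^{j}$ with $j\neq j'$; this cross-dicycle case is never addressed by your per-dicycle reduction, and it too produces violations of the stated formula even when $n_1=1$. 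Note finally that the paper itself offers no proof of this lemma (it is quoted from Aguilera), so the productive fix is to state and prove the arc-based version, for which your unfolding argument is essentially complete, rather than to keep trying to derive the arc from mere membership.
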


Thus, theorem 6.9 of \cite{circu} can be generalized in the following way:

\begin{thm}\label{validez}
Let $W\subset\Z_n$ be a subset defining a minor isomorphic to $C_{n'}^{k'}$. Then, the  inequality 
\begin{equation}\label{ecu}
\sum_{i\in W} 2 x_i + \sum_{i\notin W} x_i \geq \left\lceil
\frac{n'}{k'}\right\rceil
\end{equation}
is a valid inequality for $Q^*(C^k_n)$. Moreover, it is Chv\'atal-Gomory inequality of rank at most one (when starting from $Q(C_n^k)$).
\end{thm}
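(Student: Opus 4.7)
The plan is to realize (\ref{ecu}) as a Chv\'atal-Gomory cut of rank one from $Q(C_n^k)$, imitating the scheme used for the case $d=1$ in Theorem~6.9 of \cite{circu}, but now feeding it the generalized ingredients Remark~\ref{gena} and Lemma~\ref{genb} that have just been recorded for arbitrary $d$.

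Concretely, I would assign multiplier $\frac{1}{k'}$ to each clique inequality $\sum_{j\in C^{i}}x_{j}\geq 1$ with $i\in \Z_n-R(N)$, and multiplier $0$ to those with $i\in R(N)$. The aggregated inequality reads $\sum_{j\in\Z_n}\alpha_{j}x_{j}\geq \frac{n-|R(N)|}{k'}$, where $\alpha_{j}=\frac{1}{k'}\,|C^{j-k+1}-R(N)|$. By Remark~\ref{gena} the shift $i\mapsto i+1$ is injective on $N$, so $|R(N)|=|N|=n-n'$ and the right-hand side collapses to $\frac{n'}{k'}$. For the coefficients, Remark~\ref{gena} also yields $r\in R(N)\cap C^{j-k+1}$ iff $r-1\in N\cap C^{j-k}$, whence $|C^{j-k+1}-R(N)|=k-|N\cap C^{j-k}|=|C^{j-k}-N|$; Lemma~\ref{genb} applied at $i=j-k$ then returns $k'+1$ precisely when $j\in W$ and $k'$ otherwise. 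Consequently $\alpha_{j}=\frac{k'+1}{k'}$ for $j\in W$ and $\alpha_{j}=1$ for $j\notin W$.

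To finish, I would relax coefficientwise using $x\geq 0$ and the bound $\frac{k'+1}{k'}\leq 2$, obtaining the valid inequality $\sum_{j\in W}2x_{j}+\sum_{j\notin W}x_{j}\geq \frac{n'}{k'}$ for $Q(C_n^k)$. Since its left-hand side is integer-valued on $\{0,1\}^{n}$, the right-hand side can be rounded up to $\lceil n'/k'\rceil$, recovering (\ref{ecu}). The whole derivation uses one Chv\'atal-Gomory round from $Q(C_n^k)$, so both assertions of the theorem are settled together.

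I do not foresee a substantive obstacle. The only sensitive point is the coefficient count for $\alpha_{j}$, which Remark~\ref{gena} reduces to the count supplied by Lemma~\ref{genb}. The real content of the statement is that these two preliminary results now hold for arbitrary $d$; once that is granted the remainder is essentially the same three-line computation as in the $d=1$ case of \cite{circu}.
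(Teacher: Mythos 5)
Your proof is correct and follows essentially the same route as the paper: aggregating the rows of $C_n^k$ indexed by $\Z_n-R(N)$ with multiplier $\frac{1}{k'}$, using Remark~\ref{gena} and Lemma~\ref{genb} to identify the column sums as $k'+1$ on $W$ and $k'$ elsewhere, and then rounding in a single Chv\'atal-Gomory step. The only cosmetic difference is that you make the use of the nonnegativity constraints (to lift $\frac{k'+1}{k'}$ to $2$) and the count $|R(N)|=|N|=n-n'$ explicit, which the paper leaves implicit.
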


\begin{proof}
Let $N\subset \Z_n$ be the subset defining the minor i.e.  $C_n^k/N\approx C_{n'}^{k'}$ and let us call $A$ the row submatrix of $C^k_n$ defined by rows not in $R(N)$, i.e. $A=(C^k_n)_{\Z_n-R(N), \Z_n}$.
Recall that the $i$-th column  of $C_n^k$ is the incidence vector of $C^{i-k+1}$. After remark \ref{gena}, the number of entries at value one in the $i$-th column of $A$ is the number of times an index of the form $j+1$ with $j\notin N$ belongs to $C^{i-k+1}$, i.e. $\left|C^{i-k}-N\right|$. On the other hand, lemma \ref{genb} states that $\left|C^{i-k}-N\right|\in \{k',k'+1\}$ and $\left|C^{i-k}-N\right|=k'+1$ if and only if $i\in W$. 
In summary, each column of $A$ has $k'$ or $k'+1$ entries at value one. Moreover, the $i$-th column has $k'+1$ entries at value one if and only if $i\in W$. 
Thus if we add up all the rows of submatrix $A$ we get:

\begin{equation}
\label{ecuch-g}
\sum_{i\in W} (k'+1) x_i + \sum_{i\notin W} k' x_i \geq n'.
\end{equation}

Then, if we divide all the coefficients by $k'$ and round up, we obtain the inequality (\ref{ecu}). 
\end{proof}

From now on, we say that inequality (\ref{ecu}) is the \emph{minor
inequality} corresponding to the minor defined by $W$.

Remind that if $M$ is a minor of $C_n^k$ isomorphic to $C_{n'}^{k'}$ then $\left\lceil\frac{n'}{k'}\right\rceil\geq\left\lceil\frac{n}{k}\right\rceil$. Observe that when  $\left\lceil\frac{n'}{k'}\right\rceil=\left\lceil\frac{n}{k}\right\rceil$ the minor inequality is  
dominated by the rank constraint.
Also, if $n'$ is a multiple of $k'$ then it  is 
valid for $Q(C_n^k)$. 

In summary, the relevant minor inequalities correspond to minors $M$ isomorphic to $C_{n'}^{k'}$ such that 
$n'\neq 0\, (\mathrm{mod}\, k')$
and $\left\lceil\frac{n'}{k'}\right\rceil>\left\lceil\frac{n}{k}\right\rceil$.  In this case, we will say that $M$ is a \emph{relevant minor}. 

The following result identifies relevant minors:

\begin{lemma}\label{relminor}
Let $M$ be a circulant minor of $C^k_n$ isomorphic to $C_{n'}^{k'}$ with parameters $d$, $n_1$, $n_2$ and $n_3$ and let
$r$ be such that $1\leq r\leq k'-1$ and $n'=r\;$   $(\mathrm{mod}\; k')$.
Then, $M$ is a relevant minor if and only if $dn_3\geq kr$.
\end{lemma}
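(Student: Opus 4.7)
The plan is to translate the relevance condition $\lceil n'/k'\rceil > \lceil n/k\rceil$ into a purely linear inequality in $d$, $n_3$, $r$, and $k$, and then simplify $n'k-nk'$ using the arc-length identity $n_1 n = k(n_2+n_3)+n_3$ that the parameters of each dicycle in $G(C_n^k)$ must satisfy. Since the hypothesis $1\leq r\leq k'-1$ already rules out the trivial case $n'\equiv 0\pmod{k'}$, the definition of relevance reduces to the single inequality $\lceil n'/k'\rceil > \lceil n/k\rceil$.

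First I would write $n' = qk' + r$ with $q=\lfloor n'/k'\rfloor$, so that $\lceil n'/k'\rceil = q+1$. Then $\lceil n'/k'\rceil > \lceil n/k\rceil$ is equivalent to $\lceil n/k\rceil \leq q$, which is just $n\leq qk$. Multiplying through by $k'$ and using $qk' = n' - r$ rewrites this as
\[
rk \;\leq\; n'k - nk'.
\]
So the whole lemma amounts to showing that the right-hand side equals $dn_3$.

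Next I would substitute the parameter relations from Theorem \ref{minorsgrafo}, $n' = n - d(n_2+n_3)$ and $k' = k - dn_1$, to expand
\[
n'k - nk' \;=\; \bigl(n - d(n_2+n_3)\bigr)k - n\bigl(k - dn_1\bigr) \;=\; d\bigl(n n_1 - k(n_2+n_3)\bigr).
\]
At this point the key ingredient is the identity $n_1 n = k n_2 + (k+1) n_3 = k(n_2+n_3) + n_3$, which holds for the parameters of any simple dicycle in $G(C_n^k)$. It collapses the bracket to $n_3$, giving $n'k - nk' = d n_3$. Combining this with the reformulation above yields the stated equivalence $dn_3 \geq kr$.

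I do not expect a real obstacle here beyond arithmetic bookkeeping. The two points that require care are (i) using the parameter relations in the direction consistent with contraction strictly shrinking the matrix (so that $n'\leq n$ and $k'\leq k$, as illustrated by $C_9^4/\{i,i+4\}\approx C_7^3$), and (ii) rewriting the arc-length equation in the form $k(n_2+n_3) = n_1 n - n_3$ that is convenient for substitution. Once these are in place, the derivation is a short linear calculation.
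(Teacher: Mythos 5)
Your proposal is correct and follows essentially the same route as the paper: both reduce relevance to the integer inequality $\lceil n/k\rceil\leq \lfloor n'/k'\rfloor$ and then eliminate it via the relations $n'=n-d(n_2+n_3)$, $k'=k-dn_1$ and the dicycle identity $n_1 n=k(n_2+n_3)+n_3$, using $k'>0$ to clear the factor. The paper packages the computation as the identity $dn_3-kr=(k-dn_1)\bigl(sdn_1-d(n_2+n_3)-r\bigr)$ with $n'=sk'+r$, while you phrase it as $n'k-nk'=dn_3$; these are the same calculation up to bookkeeping.
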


\begin{proof}
We know that 
$nn_1=n_2k+n_3(k+1)$, 
$n'=n-d(n_2+n_3)$ and $k'=k-dn_1$. .

Let $s$ be such that $n-d(n_2+n_3)=s(k-dn_1)+r$ then $\left\lceil\frac{n'}{k'}\right\rceil=s+1$. 

It follows that, $M$ is a relevant minor if and only if $\left\lceil\frac{n}{k}\right\rceil\leq s$. Since  
$$n=sk-\left(sdn_1-d(n_2+n_3)-r\right),$$ we have that $\left\lceil\frac{n}{k}\right\rceil\leq s$
if and only if $$sdn_1-d(n_2+n_3)-r\geq 0.$$
It is not hard to see that $$dn_3-kr= (k-dn_1)\left(sdn_1-d(n_2+n_3)-r\right).$$
Since $k-dn_1>0$, the proof is complete.
\end{proof}

Taking advantage of the same ideas in proving theorem 6.10 in \cite{circu}, we can prove the following generalization:

\begin{thm}\label{teo}

Let $W\subset\Z_n$ be a subset defining a relevant minor isomorphic to $C_{n'}^{k'}$. Then, if  $n'=1\, (\mathrm{mod} \,k')$ the  inequality 
\begin{equation}\label{ec}
\sum_{i\in W} 2 x_i + \sum_{i\notin W} x_i \geq \left\lceil \frac{n'}{k'}\right\rceil.
\end{equation}
defines a facet of $Q^*(C^k_n)$.
\end{thm}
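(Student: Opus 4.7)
The plan is to prove that (\ref{ec}) defines a facet by exhibiting $n$ linearly independent covers of $C^k_n$ that saturate it; validity is given by Theorem \ref{validez}. Write $n'=sk'+1$, so $\lceil n'/k'\rceil=s+1$ and $\gcd(n',k')=1$. A cover $x$ of $C^k_n$ is tight for (\ref{ec}) if and only if $|x|+|x\cap W|=s+1$; in particular, $x\cap W=\emptyset$ forces $|x|=s+1$, and $|x\cap W|=1$ forces $|x|=s$.

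First I would produce $n'$ tight covers supported in $\Z_n\setminus N$, where $N\subset\Z_n$ is the subset satisfying $C^k_n/N\approx C^{k'}_{n'}$ and $W=\{i\in N:\,i-(k+1)\in N\}$. Any cover of $C^k_n/N$, regarded as a subset of $\Z_n\setminus N$ and extended by zero on $N$, is a cover of $C^k_n$: the rows in $R(N)$ are dominated by rows outside $R(N)$ once restricted to $\Z_n\setminus N$, hence they are automatically covered. Pushing forward the $n'$ standard cyclic minimum covers $\{i+hk':\,0\le h\le s\}$ of $C^{k'}_{n'}$ ($i\in\Z_{n'}$) via the isomorphism yields $n'$ covers of $C^k_n$ of cardinality $s+1$, all avoiding $W$ and therefore tight for (\ref{ec}). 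Since $n'\not\equiv 0\pmod{k'}$, the result of Sassano recalled in section \ref{sec2} implies that these $n'$ vectors are linearly independent.

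Second, for each $i\in N$ I would construct a tight cover $z^i$ of $C^k_n$ with $z^i\cap N=\{i\}$; so $|z^i|=s+1$ when $i\in N\setminus W$ and $|z^i|=s$ when $i\in W$. The construction starts from a well-chosen cyclic minimum cover of the minor and reroutes it locally through $i$: by Lemma \ref{genb}, including $i$ covers $k'$ consecutive rows outside $R(N)$ when $i\in N\setminus W$ (so $i$ exactly replaces one element of the minor cover, preserving cardinality) and $k'+1$ such rows when $i\in W$ (saving one element, since $i-(k+1)\in N$ provides the extra slack that matches $n'=sk'+1$).

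Finally, assembling the $n'+|N|=n'+d(n_2+n_3)=n$ covers as rows of an $n\times n$ matrix $M$, with the $n'$ minor covers first and the columns ordered so that $\Z_n\setminus N$ precedes $N$, one obtains a block lower triangular matrix: the top-left block has rank $n'$ by the previous step, and the bottom-right block is the identity of size $|N|$ by the constraint $z^i\cap N=\{i\}$. Hence $M$ has rank $n$, the $n$ tight covers are affinely independent on the hyperplane $\sum_{i\in W}2x_i+\sum_{i\notin W}x_i=s+1$, and (\ref{ec}) defines a facet of $Q^*(C^k_n)$. The main obstacle will be the construction and verification of the covers $z^i$ for $i\in W$, and a fortiori when $d\ge 2$: producing a cover of $C^k_n$ of cardinality exactly $s$ that meets $N$ only at $i$ requires a careful case analysis exploiting both the congruence $n'=1\pmod{k'}$ and the cyclic distribution of consecutive elements of $N$ along $\Z_n$ across the $d$ disjoint dicycles of $G(C^k_n)$.
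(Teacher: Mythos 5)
Your skeleton (validity from Theorem \ref{validez}, then $n$ tight covers: the $n'$ minimum covers lifted from the minor plus one cover $z^i$ per $i\in N$, assembled block-triangularly) is the same as the paper's, and your treatment of the $n'$ minor covers and of $i\in N\setminus W$ matches it. The gap is the requirement you impose for $i\in W$: you demand a tight cover $z^i$ with $z^i\cap N=\{i\}$. For $i\in W$ this forces $|z^i\cap W|=1$, hence tightness forces $|z^i|=\left\lceil \frac{n'}{k'}\right\rceil-1$, and such a cover need not exist. Concretely, take $n=24$, $k=3$ and the minor defined by $W=\{4,8,12\}$, whose dicycle gives $N=\{0,4,8,12,15,18,21\}$ ($d=n_1=1$, $n_2=4$, $n_3=3$), so $k'=2$, $n'=17\equiv 1\pmod 2$ and $\left\lceil \frac{17}{2}\right\rceil=9>8=\left\lceil \frac{24}{3}\right\rceil$: all hypotheses of Theorem \ref{teo} hold. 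For $i=12\in W$ your $z^{12}$ would have to be a cover of cardinality $8=\tau(C^3_{24})$ containing $12$; since $24=3\cdot 8$, every cover of cardinality $8$ has all cyclic gaps equal to $3$, so the unique candidate is $\{0,3,6,9,12,15,18,21\}$, which meets $N$ also in $0,15,18,21$. Hence no tight cover with $z\cap N=\{12\}$ exists, and the identity block on all of the $N$-columns that your rank argument needs cannot be realized. The heuristic ``including $i$ covers $k'+1$ consecutive rows of the minor, saving one element'' breaks precisely here: the $s-1$ minor columns you add, together with $i$, must also cover the rows of $C^k_n$ lying inside the run of elements of $N$ at mutual distance $k$ around $i$, and when $i$ sits at the end of such a long run this is impossible within cardinality $\left\lceil \frac{n'}{k'}\right\rceil-1$.

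The paper's proof asks for less and that is what makes it work: for $i\in W$ it only constructs a tight cover with $z^i\cap W=\{i\}$, explicitly allowing $z^i$ to contain many elements of $N\setminus W$ (its construction inserts the whole run $i,i+k,i+2k,\dots$ inside $N^0\setminus W$ up to the first exit from $N^0$, and verifies the cover property via Remark \ref{cover} and Lemma \ref{genb}). Linear independence is then recovered by ordering the columns as $\Z_n\setminus N$, then $N\setminus W$, then $W$: the covers $z^i$ with $i\in N\setminus W$ do satisfy $z^i\cap N=\{i\}$ and give an identity block on $N\setminus W$, while the covers $z^i$ with $i\in W$ give an identity block on $W$, their extra entries on the $N\setminus W$ columns being harmless for block triangularity. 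To salvage your argument you must weaken your requirement for $i\in W$ in this way and then actually carry out the construction and the cover verification, which is exactly the delicate part of the paper's proof.
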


\begin{proof}
We will show there are $n$ linearly independent roots of inequality (\ref{ec}), i.e. $n$ linearly independent covers of $C^k_n$ that satisfy (\ref{ec}) at equality.

Let $N=\cup_{j\in \Z_d} N^j \subset \Z_n$ be the subset defining the minor. i.e.  $C_n^k/N\approx C_{n'}^{k'}$ and let us denote the elements of $\Z_n - N$ as $\{v_0,\ldots,v_{n'-1}\}$ with $0\leq v_0\leq v_{1}\leq \dots \leq v_{n'-1}\leq n-1$.

Recall that, the subsets $\tilde{x}^l=\{v_{l+sk'}: 0\leq s\leq \left\lfloor \frac{n'}{k'}\right\rfloor\}$ with $l\in \Z_{n'}$ are $n'$ linearly independent minimum covers of $C^k_n/N$ and then they are $n'$ linearly independent roots of (\ref{ec}).

For the remaining $|N|$ roots, we will construct a root $z^i$ for every $i\in N$.

Observe that as $n'=1$ (mod $k'$), if $l\in \Z_{n'}$ then $l+\lfloor \frac{n'}{k'}\rfloor k'=l-1$ (mod $n'$). Hence, $v_{l-1}\in \tilde{x}^l$ for every $l\in \Z_{n'}$.
Therefore, for every $l\in \Z_{n'}$ there are two consecutive elements of $\Z_n - N$ that  belong to $\tilde{x}^l$, i.e.
$\{v_{l-1},v_l\}\subset \tilde{x}^l$ for every $l\in \Z_{n'}$. Moreover, by lemma \ref{genb}, we know that for every $i\in \Z_n$, $k'\leq |C^i-N|\leq k'+1<n'$ and then, there exists $l\in\Z_{n'}$ such that $v_{l}\notin C^i$ and $v_{l+1} \in C^i$.

Let us start with $i\in N-W$. Let $l\in \Z_{n'}$ such that $v_{l}\notin C^i$ and $v_{l+1} \in C^i$. Observe that,  by lemma \ref{genb} we have that $|C^{i-k}-N|=k'$ and since $k'\geq 2$ it follows that $v_{l-1}\in C^{i-k}$.
Then, the vector $z^i= (\tilde{x}^l -\{v_l\})\cup \{i\}$ satisfies the inequality (\ref{ec}) at equality and by the condition (iii) in remark \ref{cover} it is a cover. Also observe that $z^i\cap N=\{i\}$, and then $\{\tilde{x}^l: l\in \Z_{n'}\} \cup \{z^i:i \in N-W\}$ is a set of linearly independent covers of $C_n^k$.

Let us now obtain $z^i$ for $i\in W$. Let $i\in W$ and w.l.o.g. assume that $i\in N^0$. First, consider the minimum cover of $C^k_n$
$$x^i=\left\{i+tk: 0\leq t\leq \left\lceil \frac{n}{k} \right\rceil-1\right\}.$$
If $x^i\subset N^0$ then $x^i\cap W=\{i\}$, and since $x^i$ satisfies (\ref{ec}) we have
$$\sum_{j\in W} 2 x_j^i + \sum_{j\notin W} x_j^i=2+ \left\lceil \frac{n}{k} \right\rceil-1 \geq \left\lceil \frac{n'}{k'} \right\rceil\geq \left\lceil \frac{n}{k} \right\rceil+1.$$
Hence, $x^i$ is a root of (\ref{ec}) and then we set $z^i= x^i$.

Otherwise, let $s$ be the smallest nonnegative integer such that $s\leq\left\lceil \frac{n}{k} \right\rceil-1$ and $i+sk \notin N^0$. 
It holds that $i+sk+1\in W\cap N^0$ and for all $1\leq t\leq s-1$, $i+tk \in N^0-W$.

Now, let $l\in \Z_{n'}$ such that $v_{l-1}\notin C^i$ but $v_l \in C^i$. 

Hence, by lemma \ref{genb} we have $|C^{i+(t-1) k}-N|=k'$, for $1\leq t< s$ and $|C^{i+(s-1) k+1}-N|=k'+1$. Then,
$$C^{i+(t-1) k}-N=\{v_{l+(t-1)k'}, \dots, v_{l+tk'-1}\}$$ for all $1\leq t\leq s-1$
and $$C^{i+(s-1) k+1}-N=\{v_{l+(s-1)k'}, \dots, v_{l+sk'}\}.$$
 
We define 
$$z^i=\tilde{x}^l - (\{v_{l-1}\}\cup \{v_{l+t k'}: 0\leq t\leq s-1\})\cup \{i+t k: 0\leq t\leq s-1\}.$$
We have seen that $v_{l+sk'}\in C^{i+(s-1)k+1}$. By remark \ref{cover} (ii), we only need to prove that $v_{l+sk'}\in z^i$. For this, we need to verify that $v_{l+sk'}\neq v_{l-1}$. 

But $v_{l+sk'}=v_{l-1}$ if and only if $s=\left\lfloor \frac{n'}{k'}\right\rfloor$ and this cannot happen since we consider $s\leq\left\lceil \frac{n}{k}\right\rceil -1$ and $\left\lceil \frac{n}{k} \right\rceil < \left\lceil \frac{n'}{k'}\right\rceil$. Then $z^i$ is a cover and it is easy to check that is also a root of (\ref{ec}).
In addition $z^i\cap W=\{i\}$. Hence, it is not hard to see that the $\{ \tilde{x}^l: l\in \Z_{n'}\} \cup \{z^i:i \in N\}$ is a set of linearly independent covers of $C_n^k$.
\end{proof}

Computational experiences lead us to conjecture that the converse of theorem \ref{teo} always holds, i.e. a minor inequality defines a facet only when it corresponds to a relevant minor isomorphic to $C^{k'}_{n'}$ with  $n'=1\, (\mathrm{mod}\, k')$.

Moreover, we have the following  
\begin{conjecture}\label{resto1}
If $W\subset \Z_n$ defines a relevant minor of $C_n^k$ isomorphic to $C^{k'}_{n'}$ then, 
there exists $W'\subset W$ that defines a relevant minor isomorphic to $C^{k'}_{n''}$ such that $n''=1\, (\mathrm{mod}\, k')$ and $\left\lceil \frac{n''}{k'}\right\rceil \geq \left\lceil \frac{n'}{k'}\right\rceil$.
\end{conjecture}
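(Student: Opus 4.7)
The plan is to proceed by induction on the remainder $r\in\{1,\dots,k'-1\}$ of $n'$ modulo $k'$. The base case $r=1$ is trivial: simply take $W'=W$. For the inductive step with $r\ge 2$, I would construct a proper subset $\widetilde W\subsetneq W$ defining a relevant minor $C_{\widetilde n}^{k'}$ with the same $k'$ and with $\lceil\widetilde n/k'\rceil\ge\lceil n'/k'\rceil$, and then apply the inductive hypothesis to $\widetilde W$.

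The key construction uses local modifications of the simple dicycles $D^j$ in $G(C_n^k)$. The basic move is: within a single $D^j$, locate a run of $k$ consecutive length-$(k+1)$ arcs going from $v$ to $v+k(k+1)$ (through the vertices $v,v+(k+1),\dots,v+k(k+1)$), and re-route it as $k+1$ consecutive length-$k$ arcs through the new internal vertices $v+k,v+2k,\dots,v+k\cdot k$. Since the $k$ deleted internal vertices are precisely the elements of $W^j$ lying in that segment, this corresponds to removing $k$ elements of $W$. Applied uniformly to all $d$ dicycles, the move produces $\widetilde W=W\setminus\bigcup_{j}B^j$, defining a minor with parameters $(d,n_1,n_2+k+1,n_3-k)$, hence with the same $k'=k-dn_1$ and with $\widetilde n=n'-d$; a short check via Lemma \ref{relminor} shows that the resulting minor remains relevant. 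The hypothesis $dn_3\ge kr\ge 2k$ from Lemma \ref{relminor} ensures that length-$(k+1)$ arcs are abundant on average.

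The main obstacle is twofold. First, a run of $k$ consecutive length-$(k+1)$ arcs within a single dicycle might simply fail to exist: the arcs could be interleaved with length-$k$ arcs, or a dicycle could have fewer than $k$ length-$(k+1)$ arcs in total. Handling this requires working either with longer segments of length a positive multiple of $k(k+1)$ that admit alternative decompositions into length-$k$ and length-$(k+1)$ arcs, or with suitably chosen non-local modifications, while checking that the new internal vertices do not collide with vertices of $N$ in the unchanged portions of $D^j$ or in other dicycles. Second, the uniform move changes the remainder of $n'$ modulo $k'$ by $-d$, so repeated application reaches $1$ only when $\gcd(d,k')\mid r-1$. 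In the remaining cases, $W'$ must correspond to a minor with a different dicycle count $d^*\neq d$ and adjusted $n_1^*=dn_1/d^*$, preserving $d^*n_1^*=dn_1$ and hence $k'$. Concretely, several original dicycles with low $n_1$ must be ``merged'' into fewer dicycles with higher $n_1$; a small example like $C_{48}^6$ with $d=2$, $n_1=1$, $n_2=1$, $n_3=6$, where the two dicycles must merge into a single dicycle with parameters $(1,2,9,6)$ to realize $n''=33\equiv 1\pmod 4$, already shows that this step cannot be avoided. Proving that such dicycle-merging subsets of $W$ always exist appears to be the real crux of the conjecture, and I expect it to demand a delicate combinatorial analysis of which factorizations $d^*n_1^*=dn_1$ are realizable as dicycles in $G(C_n^k)$ supported inside the given $W$, likely guided by the computational evidence mentioned by the authors.
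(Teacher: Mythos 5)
You should first note what the paper actually does with this statement: it is Conjecture~\ref{resto1}, explicitly left open. The authors prove only the weaker Lemma~\ref{n3maschico}, where the containment $W'\subset W$ is dropped and the new minor is obtained purely by parameter arithmetic (setting $\tilde{n}_3=dn_3-k(r-1)$, $\tilde{n}_2=dn_2+(k+1)(r-1)$ and invoking the existence of dicycles with prescribed parameters), and the special case $d=n_1=1$ (Lemma~\ref{casopart}, by truncating $W$). Your ``uniform local move'' (replacing $k$ consecutive arcs of length $k+1$ by $k+1$ arcs of length $k$) is, at the level of parameters, a per-dicycle version of exactly that substitution, with the added virtue that, when it applies, it keeps the new $W'$ inside the old $W$; your base case and the $d=n_1=1$ situation likewise mirror Lemma~\ref{casopart}. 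So the parts of your plan that work reprove (or mildly strengthen) what the paper already has; they do not reach the conjecture.

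The genuine gaps are the two you yourself flag, and neither is resolved. First, the availability of the local move is not established: a dicycle need not contain $k$ consecutive length-$(k+1)$ arcs, and even when a segment can be re-routed you must check that the new internal vertices $v+k,\dots,v+k^2$ miss all of $N$ (including the other $d-1$ dicycles); Theorem~\ref{minorsgrafo} forces all dicycles to keep identical parameters, so the move must succeed simultaneously in every dicycle. Second, and more fundamentally, each uniform application decreases $n'$ by exactly $d$, so to land on $n''=1\ (\mathrm{mod}\ k')$ while keeping $\lceil n''/k'\rceil\geq\lceil n'/k'\rceil$ you need $d\mid r-1$ (not merely $\gcd(d,k')\mid r-1$: overshooting past $sk'+1$ drops the ceiling, which the conjecture forbids). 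In the remaining cases one must exhibit, \emph{inside the given} $N$-structure and with $W'\subset W$, a dicycle realizing a different factorization $d^{*}n_1^{*}=dn_1$; your $C^6_{48}$ example (where only $d^{*}=1$, $n_1^{*}=2$, $n_2^{*}=9$, $n_3^{*}=6$, $n''=33$ can work --- the arithmetic checks out) shows this merging step is unavoidable, and you offer no argument that such merged dicycles always exist. That existence question is precisely why the statement is a conjecture in the paper, supported there only by computational evidence; your proposal, by its own admission, does not close it.
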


Clearly, if the conjecture holds, the converse of theorem \ref{teo} is true. 
Nevertheless, we have a weaker result than the previous conjecture: 

\begin{lemma}\label{n3maschico}
If $W\subset \Z_n$ defines a relevant minor of $C_n^k$ isomorphic to $C^{k'}_{n'}$ then, 
there exists $W'\subset \Z_n$ with $|W'|\leq |W|$ that defines a relevant minor isomorphic to $C^{k'}_{n''}$ such that $n''=1\, (\mathrm{mod}\, k')$ and $\left\lceil \frac{n''}{k'}\right\rceil \geq \left\lceil \frac{n'}{k'}\right\rceil$.
\end{lemma}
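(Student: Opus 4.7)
The plan is to take $n''=n'-(r-1)$, where $r$ is defined by $n'=sk'+r$ with $1\le r\le k'-1$; this automatically gives $n''\equiv 1\pmod{k'}$ and $\lceil n''/k'\rceil=s+1=\lceil n'/k'\rceil$, so both required conditions are met. By Lemma~\ref{relminor}, relevance of the starting minor is equivalent to $dn_3\ge kr$.

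I first determine which parameters $(d',n_1',n_2',n_3')$ any candidate minor $C_{n''}^{k'}$ with $n''=n'-(r-1)$ must satisfy. Imposing $d'n_1'=k-k'=dn_1$, the dicycle equation $n_1'n=kn_2'+(k+1)n_3'$, and $n''=n-d'(n_2'+n_3')$, and solving the resulting linear system forces, \emph{independently of $d'$}, the totals
\[
d'n_2'=dn_2+(k+1)(r-1),\qquad d'n_3'=dn_3-k(r-1),
\]
the second being non-negative thanks to $dn_3\ge kr$. Therefore $|W'|=d'n_3'=dn_3-k(r-1)\le dn_3=|W|$, giving the size bound. A short verification also shows that any alternative value $n''\equiv 1\pmod{k'}$ with $\lceil n''/k'\rceil\ge\lceil n'/k'\rceil$ strictly larger than $n'-(r-1)$ would violate the size bound, so the choice of $n''$ is essentially forced.

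To achieve $\gcd(n_1',n_2',n_3')=1$, I set $d':=g=\gcd\bigl(dn_1,\,dn_2+(k+1)(r-1),\,dn_3-k(r-1)\bigr)$ and take $n_1',n_2',n_3'$ as the respective quotients. By Theorem~\ref{minorsgrafo}, the lemma is then reduced to exhibiting $g$ vertex-disjoint simple dicycles with the coprime parameters $(n_1',n_2',n_3')$ in $G(C_n^k)$; once such a family is produced, the set $W'$ is recovered as the union of its $W^j$-markers via the recipe described in Section~\ref{sec2}.

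The main obstacle is precisely this last, geometric step. A single simple dicycle with parameters $(n_1',n_2',n_3')$ exists from coprimality and the dicycle equation (the characterization recalled just before Theorem~\ref{minorsgrafo}), but packing $g$ disjoint copies is delicate. In the favourable case $g\mid n$, one can use the rotations $D,\,D+n/g,\ldots,D+(g-1)n/g$ of a single such dicycle $D$, which are automatically vertex-disjoint by circular symmetry. In the harder case $g\nmid n$, the plan is to modify $W$ itself: each local rewiring that replaces $k$ consecutive arcs of length $k+1$ along a dicycle by $k+1$ arcs of length $k$ keeps the dicycle's endpoints fixed, adds one vertex, and shifts local counts by $n_2\mapsto n_2+(k+1)$, $n_3\mapsto n_3-k$. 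Performing $r-1$ such moves in a balanced manner across the $d$ original dicycles of $W$ — and regrouping the result into $g$ congruent dicycles when coprimality forces $d'<d$ — should realize the target parameters and so furnish the desired $W'\subset\Z_n$.
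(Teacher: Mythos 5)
Your arithmetic follows the paper's proof exactly: you take $n''=n'-(r-1)$, use Lemma~\ref{relminor} to get $dn_3\ge kr$, and solve for the forced totals $\tilde n_2=dn_2+(k+1)(r-1)$ and $\tilde n_3=dn_3-k(r-1)>0$, which give the residue $n''=1\,(\mathrm{mod}\,k')$, the same ceiling, and $|W'|=\tilde n_3\le |W|$. The genuine gap is the step you yourself flag as the ``main obstacle'': actually producing a set $W'\subset\Z_n$, i.e.\ exhibiting $g$ pairwise disjoint simple dicycles with the coprime parameters. Neither of your two cases closes it. In the case $g\mid n$, the rotations $D,\,D+n/g,\dots,D+(g-1)n/g$ are \emph{not} automatically vertex-disjoint: circular symmetry only guarantees that each rotate is again a simple dicycle, not that distinct rotates avoid one another; if $D$ contains two vertices whose circular distance is a nonzero multiple of $n/g$ (nothing in the existence result for $D$ rules this out, in particular when $n_1'\ge 2$ and the dicycle wraps around $\Z_n$ several times), two rotates intersect. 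In the case $g\nmid n$ you give only a plan (the rewirings ``should realize the target parameters''): you do not verify that $r-1$ such moves can always be performed (they need enough suitably placed arcs of length $k+1$), that they preserve simplicity and mutual disjointness, or that the final ``regrouping'' into $g$ dicycles with coprime parameters is possible. So the argument is incomplete exactly at its load-bearing step, and the partial justifications offered for it are unsound.

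The paper avoids this construction altogether: it never modifies $W$ or packs dicycles by hand. Having checked that the new totals satisfy the dicycle equation $(dn_1)\,n=k\tilde n_2+(k+1)\tilde n_3$ with $\tilde n_3>0$, it sets $\tilde d$ equal to the gcd of the totals and invokes Theorem~\ref{minorsgrafo}, together with the sufficiency (recalled in Section~\ref{sec2}, from the cited work of Cornu\'ejols--Novick and Aguilera) of the conditions $n_1n=kn_2+(k+1)n_3$ and $\gcd(n_1,n_2,n_3)=1$ for the existence of dicycles in $G(C_n^k)$, to conclude that a circulant minor with these parameters exists; $W'$ is then whatever subset defines that minor, with $|W'|=\tilde n_3$ and $n''=n-(\tilde n_2+\tilde n_3)=n'-(r-1)$. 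To repair your proof you should either appeal to that existence machinery as the paper does, or give a correct disjointness argument for your dicycle family; as written, the rotation/rewiring construction does not deliver $W'$.
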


\begin{proof}
Let $s$ be such that $n'=sk'+r$. If $r=1$ the result clearly holds. Let $r$ be such that $2\leq r\leq k'-1$. 

By theorem \ref{minorsgrafo} there exist non negative integers $d$, $n_1, n_2$ and $n_3$ such that $n_1 n= (n_2+n_3)k+n_3$, $n'=n- d(n_2+n_3)$, $k'=n-dn_1$ and $|W|=dn_3$.

In addition, by lemma \ref{relminor}, we have that $dn_3\geq kr$. 

Then if we set $\tilde{n}_3=dn_3-k(r-1)$ and $\tilde{n}_2=dn_2+(k+1)(r-1)$, we have that 
$n_1n= (\tilde{n}_2+\tilde{n}_3)k+\tilde{n}_3$ 
with $0<\tilde{n}_3< dn_3$. 

Considering $\tilde{d}=\gcd (n_1, \tilde{n}_2, \tilde{n}_3)$, theorem \ref{minorsgrafo} states there exists a minor of $C^k_n$ isomorphic to $C^{k'}_{n''}$ with subset $W'$ such that $|W'|=\tilde{n}_3< dn_3=|W|$ and $n''=n-(\tilde{n}_2+\tilde{n}_3)$. 

Moreover, $n''=1\,(\mathrm{mod} \, k')$ and $\left\lceil \frac{n''}{k'}\right\rceil= \left\lceil \frac{n'}{k'}\right\rceil=s+1$. \end{proof}

In addition, we can state that
\begin{lemma}\label{casopart}
The conjecture \ref{resto1} holds for relevant minors with parameters $d=n_1=1$.
\end{lemma}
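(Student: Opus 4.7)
The approach is to exploit the clean description in Remark \ref{condW}(i): when $d=n_1=1$, a set $W=\{w_0<w_1<\cdots<w_{n_3-1}\}$ defines a circulant minor precisely when its cyclic consecutive differences all satisfy $w_{i+1}-w_i\equiv 1\pmod k$ and $w_{i+1}-w_i\geq k+1$. I plan to construct $W'\subset W$ by removing a single block of consecutive (in the cyclic order of $W$) elements, chosen so that the remaining cyclic differences still satisfy the same two conditions. Remark \ref{condW}(i) will then immediately certify that $W'$ defines a circulant minor with $\tilde d=\tilde n_1=1$, hence with the same $k'=k-1$, and it only remains to read off the new $n''$.

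Write $n'=sk'+r$ with $1\leq r\leq k'-1$. If $r=1$ the conclusion is immediate with $W'=W$. Assume $r\geq 2$. Since the minor is relevant, Lemma \ref{relminor} gives $n_3\geq kr$, so it is legitimate to set $W':=W\setminus\{w_1,w_2,\ldots,w_{k(r-1)}\}$; this leaves $|W'|=n_3-k(r-1)\geq k\geq 2$.

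The verification of Remark \ref{condW}(i) for $W'$ reduces to a single new ``merged'' cyclic difference $w_{k(r-1)+1}-w_0$, which is a sum of $k(r-1)+1$ consecutive original differences of $W$. Each summand is $\equiv 1\pmod k$ and $\geq k+1$, so the total is $\equiv k(r-1)+1\equiv 1\pmod k$ and plainly $\geq k+1$; every remaining cyclic difference of $W'$ coincides with one of $W$ and is already fine. Hence $W'$ defines a circulant minor isomorphic to some $C^{k'}_{n''}$. A short computation using $n=\tilde n_2 k+\tilde n_3(k+1)$ with $\tilde n_3=n_3-k(r-1)$ yields $\tilde n_2=n_2+(r-1)(k+1)$, whence $\tilde n_2+\tilde n_3=n_2+n_3+(r-1)$ and therefore $n''=n'-(r-1)=sk'+1$. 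In particular $n''\equiv 1\pmod{k'}$ and $\lceil n''/k'\rceil=s+1=\lceil n'/k'\rceil>\lceil n/k\rceil$, so the new minor is also relevant.

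The only real subtlety is choosing the block size correctly: it must be a multiple of $k$ so that each cyclic difference of $W'$ stays in the residue class $1$ modulo $k$, and it must be exactly $k(r-1)$ in order to shift $n'$ into the residue class $1$ modulo $k'$. The hypothesis $d=n_1=1$ is crucial because it makes the cyclic differences of $W$ unambiguous and reduces the whole construction to one-dimensional modular arithmetic; for larger $n_1$ or $d$, deletions would interact with several disjoint dicycles and with the $\gcd$ condition in Theorem \ref{minorsgrafo}, which is presumably why the full Conjecture \ref{resto1} remains open.
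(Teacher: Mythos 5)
Your proof is correct and follows essentially the same route as the paper: delete a block of $k(r-1)$ consecutive elements of $W$ (the paper drops the last $k(r-1)$ elements, you drop $w_1,\dots,w_{k(r-1)}$ -- a cosmetic difference), certify the result via Remark \ref{condW}(i), and recompute the parameters exactly as in Lemma \ref{n3maschico} to get $n''=sk'+1$. Your write-up merely makes explicit the verification that the paper compresses into ``it is not hard to see'' and ``the same arguments as in the previous lemma.''
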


\begin{proof}
Let $W\subset \Z_n$ be a subset defining a relevant minor of $C_n^k$ isomorphic to $C^{k-1}_{n'}$ and $n'=s(k-1)+r$ with $2\leq r\leq k-1$.
Assume that $W=\{w_i: i\in \Z_{|W|}\}$ with
$0\leq w_0 < w_1< \ldots < w_{|W|-1}\leq n-1$.

Take $W'=\{w_i: 0\leq i \leq |W|-k(r-1)-1\}$. Is not hard to see that, by remark \ref{condW}, $W'$ defines a relevant minor with parameters $d=n_1=1$ and by using the same arguments as in the previous lemma, the minor is isomorphic to $C^{k-1}_{n''}$ with $n''=1\, (\mathrm{mod}\,(k-1))$.
\end{proof}

As a consequence we have,

\begin{cor}
Let $k\leq 4$. If $W\subset \Z_n$ defines a relevant minor of $C_n^k$ isomorphic to $C^{k'}_{n'}$  then 
the corresponding  minor inequality defines a facet of $Q^*(C^k_n)$ if and only if $n'=1\, (\mathrm{mod} \,k')$. 
\end{cor}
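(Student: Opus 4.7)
The ``if'' direction follows directly from Theorem~\ref{teo}, so the plan is to establish the ``only if'' direction by showing that, under the hypothesis $n' \neq 1\,(\mathrm{mod}\,k')$, the minor inequality~(\ref{ec}) is dominated by a positive combination of other facets and hence cannot itself be facet-defining. The argument splits into two steps: a parameter case analysis that reduces the problem to a situation covered by Lemma~\ref{casopart}, followed by a polyhedral domination argument.

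For the case analysis, I would observe that every circulant minor satisfies $k' = k - dn_1 \geq 2$ with $d, n_1 \geq 1$, hence $dn_1 \leq k - 2 \leq 2$. The case $k = 2$ is vacuous (no proper minors exist). Whenever $k' = 2$---which happens for $k = 3$, and for $k = 4$ with $(d,n_1) \in \{(2,1),(1,2)\}$---the condition ``relevant'' forces $n'$ to be odd, i.e., $n' = 1\,(\mathrm{mod}\,k')$ automatically, contradicting the standing hypothesis. The only scenario left to handle is therefore $k = 4$, $(d,n_1) = (1,1)$, $k' = 3$, and $n' = 2\,(\mathrm{mod}\,3)$.

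In this remaining case, since $d = n_1 = 1$, Lemma~\ref{casopart} supplies a subset $W' \subset W$ defining a relevant minor isomorphic to $C^{3}_{n''}$ with $n'' = 1\,(\mathrm{mod}\,3)$. Tracing the explicit construction in the proofs of Lemmas~\ref{n3maschico} and~\ref{casopart} yields additionally that $\left\lceil n''/3\right\rceil = \left\lceil n'/3\right\rceil$ and that the inclusion $W' \subsetneq W$ is strict (since the residue satisfies $r = 2$). By Theorem~\ref{teo}, the $W'$-minor inequality is itself a facet of $Q^*(C_n^k)$.

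For the domination step, the identity
\begin{equation*}
\sum_{i \in W} 2 x_i + \sum_{i \notin W} x_i \;=\; \left(\sum_{i \in W'} 2 x_i + \sum_{i \notin W'} x_i\right) + \sum_{i \in W \setminus W'} x_i
\end{equation*}
exhibits (\ref{ec}) as the sum of the $W'$-minor inequality and the boolean facets $x_i \geq 0$ ($i \in W \setminus W'$), with matching right-hand side $\left\lceil n''/3\right\rceil = \left\lceil n'/3\right\rceil$. Any root of (\ref{ec}) must therefore satisfy the $W'$-minor inequality at equality and have $x_i = 0$ for every $i \in W \setminus W' \neq \emptyset$, so the face defined by (\ref{ec}) is contained in the boolean facet $\{x_i = 0\} \cap Q^*(C_n^k)$; since the coefficient vector of~(\ref{ec}) is plainly not a positive scalar multiple of $e_i$, this containment is strict, the face has dimension at most $n - 2$, and (\ref{ec}) is not facet-defining. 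The principal obstacle is that Conjecture~\ref{resto1} is only established in general for $d = n_1 = 1$; the hypothesis $k \leq 4$ is precisely what confines us to parameter combinations either covered by Lemma~\ref{casopart} or rendered trivial by the case analysis.
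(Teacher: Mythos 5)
Your proof is correct and follows essentially the same route as the paper: reduce via the case analysis on $k'$ (relevance makes $k'=2$ trivial, and $k'=3$ forces $d=n_1=1$) and then invoke Lemma~\ref{casopart}. The only difference is that you spell out the domination step (writing the $W$-inequality as the $W'$-inequality plus boolean facets) that the paper leaves implicit in the remark that Conjecture~\ref{resto1} implies the converse of Theorem~\ref{teo}, and that step is carried out correctly.
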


\begin{proof}
If $k\leq 4$, every minor inequality valid for $Q^*(C^k_n)$ corresponds to a relevant minor isomorphic to $C^{k'}_{n'}$  with $k'=2$ or $k'=3$. If $k'=3$, the minor has parameters $d=n_1=1$ and then the corollary follows from lemma \ref{casopart}. It only remains to observe that when $k'=2$ and the minor inequality defines a facet of $Q^*(C^k_n)$, then $n'$ has to be odd.
\end{proof}

\section{The separation problem for minor inequalities}\label{sec4}

In the context of the study of the dominating set problem on cycles, the authors in \cite{Mah} give a polynomial time algorithm to separate minor inequalities valid for $Q^*(C^3_n)$. Let us observe that every circulant minor of $C^3_n$ has parameters $d=n_1=1$. 

In this section we study, the separation problem for 
inequalities associated with circulant minors of any circulant matrix with parameter $n_1=1$ and any $d\geq 1$.

In order to do so, let us first present a technical lemma for these inequalities.

\begin{lemma}\label{alfa}
Let $d, n_1=1, n_2, n_3$ be the parameters associated with a circulant minor of $C^k_n$ such that 
$n_3=r\, (\mathrm{mod} \,(k-d))$ 
with  $1\leq r< k-d$. 
Then
$$\left\lceil \frac{n-d(n_2+n_3)}{k-d}\right\rceil= \left(\frac{n}{k}-\frac{r}{k-d}+1\right)+\frac{1}{k(k-d)}dn_3. $$  
\end{lemma}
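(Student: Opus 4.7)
The plan is to evaluate both sides of the claimed identity using the structural relation supplied by Theorem \ref{minorsgrafo}: since $n_1 = 1$, the equation $n_1 n = k n_2 + (k+1) n_3$ collapses to $n = k(n_2 + n_3) + n_3$. This single identity is the real content of the lemma; everything else is bookkeeping.

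First I would rewrite the numerator inside the ceiling on the left-hand side. Substituting the relation above yields $n - d(n_2 + n_3) = (k-d)(n_2 + n_3) + n_3$. Writing $n_3 = s(k-d) + r$ with $0 \leq s$ and $1 \leq r < k-d$, the quotient becomes $(n_2 + n_3 + s) + r/(k-d)$. Since $0 < r/(k-d) < 1$, the ceiling on the left-hand side is exactly $n_2 + n_3 + s + 1$.

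Next I would reduce the right-hand side to the same value. Using $n/k = (n_2+n_3) + n_3/k$ from the same structural identity, the key step is combining the two $n_3$-terms over the common denominator $k(k-d)$:
\[
\frac{n_3}{k} + \frac{d n_3}{k(k-d)} = \frac{n_3(k-d) + d n_3}{k(k-d)} = \frac{n_3}{k-d}.
\]
After this reduction, the right-hand side becomes $(n_2+n_3) + (n_3 - r)/(k-d) + 1$, and since $n_3 - r = s(k-d)$ this is $n_2 + n_3 + s + 1$, matching the left-hand side.

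There is no genuine obstacle here: the lemma is an algebraic identity dressed in a ceiling. What one should verify carefully is that the two hypotheses play exactly the roles one expects. The assumption $n_1 = 1$ is precisely what gives the clean divisibility $n = k(n_2+n_3) + n_3$, and the restriction $1 \leq r < k-d$ is exactly what forces the ceiling to contribute a single $+1$ and makes $s = (n_3-r)/(k-d)$ a nonnegative integer.
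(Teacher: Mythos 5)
Your proof is correct and follows essentially the same route as the paper: both use $n_1=1$ to get $n=k(n_2+n_3)+n_3$, write $n_3=s(k-d)+r$ so the ceiling equals $n_2+n_3+s+1$, and then verify by the same elementary algebra that the right-hand side reduces to this value. No comments beyond that.
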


\begin{proof} 
Let $s$ be the nonnegative integer such that $n_3=s(k-d)+r$. 
Since $n=k(n_2+n_3)+n_3$ we have that 
$$\left\lceil \frac{n-d(n_2+n_3)}{k-d}\right\rceil= \left\lceil \frac{(k-d)(n_2+n_3)+n_3}{k-d}\right\rceil= n_2+n_3+s+1.$$

Since $s=\frac{n_3- r}{k-d}$ and $n_2+n_3=\frac{n-n_3}{k}$ it follows that 
$$n_2+n_3+s+1= \frac{n-n_3}{k} + \frac{n_3-r}{k-d}+1=\left(\frac{n}{k}-\frac{r}{k-d}+1\right)+\frac{1}{k(k-d)}dn_3$$
and the proof is complete.
\end{proof}

From the previous lemma, if $W\subset \Z_n$ defines a relevant minor of $C_n^k$ with parameters $d, n_1=1, n_2, n_3$ and $n_3=r\, (\mathrm{mod} \,(k-d))$ 
with  $1\leq r< k-d$, then 
the corresponding minor inequality can be written as 
$$\sum_{i\in W} x_i+\sum_{i=1}^n x_i  \geq  \alpha(d,r) +\beta(d) \left|W\right|$$ 
where 
$$\alpha(d,r)= \frac{n}{k} -\frac{r}{k-d}+1,\quad \beta(d)=\frac{1}{k(k-d)}$$
or equivalently
\begin{equation}\label{faceta}
\sum_{i\in W} (x_i - \beta(d))  \geq   \alpha(d,r)- \sum_{i=1}^n x_i.
\end{equation}

Given $C^k_n$ and two integer numbers $d,r$ with $1\leq d\leq k-2$ and $1\leq r <k-d$,  
we define the function $c^d$ on $\R$ such that $c^d(t)= t - \beta(d)$ and the function $L^{d,r}$ on $\R^n$ such that $L^{d,r}(x)= \alpha(d,r)- \sum_{i=1}^n x_i$.  

Then, the inequality (\ref{faceta}) can be written as

\begin{equation}\label{faceta2}
\sum_{i\in W} c^d (x_i) \geq   L^{d,r} (x).
\end{equation}

We will first extend to any matrix $C^k_n$ the techniques used in \cite{Mah} for matrices $C^3_n$, in order to separate inequalities corresponding to relevant minors with parameters $d=n_1=1$. 

Let us denote by $\mathcal{W}(d,r)$ the set of subsets $W\subset \Z_n$ defining relevant minors with parameters $d, n_1=1, n_2, n_3=r\, (\mathrm{mod} \, (k-d))$. 
Observe that, from lemma \ref{casopart}, when $d=n_1=1$ every relevant minor inequality  corresponds to the case $r=1$, that is why we only  consider subsets $W\in \mathcal{W}(1,1)$.

To this end, given $n,k$ 
let $K_n^k=\left(V, A\right)$ be the digraph with set of nodes 
$V=\{v^j_i :i\in \Z_{n}\;, j\in \Z_{k-1} \}\cup \{t\}$ 
and set of arcs defined as follows: first consider in $A$ the arcs  
\begin{itemize}
	\item $(v_0^0,v_l^{1})$ for all $l$ such that $k+1\leq l\leq n-1$ and $l=1$ (mod $k$),
\end{itemize}
then consider in a recursive way:
\begin{itemize}
	\item for each $(v,v_i^{j})\in A$, add $(v_i^j,v_l^{j+1})$ whenever $l$ is such that $i+k+1\leq l\leq n-1$ and $l-i=1$ (mod $k$),
  \item for each $(v,v_i^{0})\in A$, add $(v_i^0,t)$ whenever $i$ is such that $i\leq n-(k+1)$ and $n-i=1$ (mod $k$).
\end{itemize}

Note that, by construction, $K_n^k$ is acyclic. For illustration, digraph $K_{20}^4$ is depicted in figure \ref{fig:C204}.

\begin{figure}[h]
	\centering
		\includegraphics[width=0.6\textwidth]{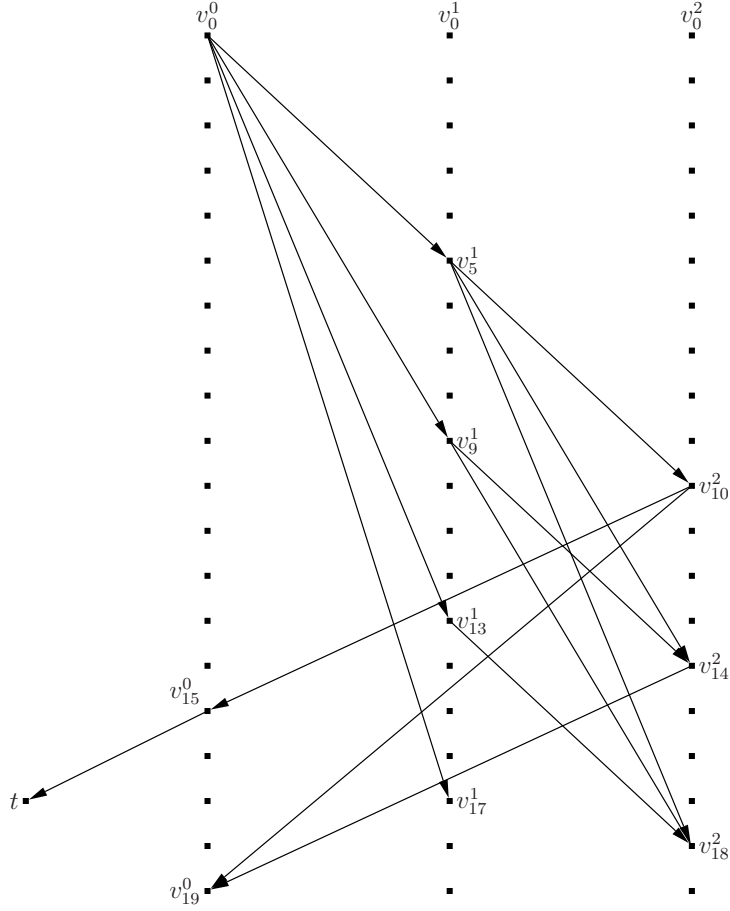}
	\caption{The digraph $K_{20}^4$}
	\label{fig:C204}
\end{figure}

We have the following result:

\begin{lemma}
There is a one-to-one correspondence between $v_0^0 t$-paths in $K_n^k$ and subsets $W\in \mathcal{W}(1,1)$ with $0\in W$.
\end{lemma}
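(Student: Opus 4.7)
The plan is to exhibit two explicit maps, check each produces a legitimate object, and verify they are mutual inverses. Given a $v_0^0 t$-path $P = v_0^0 \to v_{w_1}^{j_1} \to \cdots \to v_{w_m}^{j_m} \to t$ in $K_n^k$, I will send $P$ to $W_P := \{0, w_1, \ldots, w_m\} \subset \Z_n$. Conversely, given $W = \{0 = w_0 < w_1 < \cdots < w_{|W|-1}\} \in \mathcal{W}(1,1)$, I will construct the path $P_W := v_0^0 \to v_{w_1}^1 \to v_{w_2}^2 \to \cdots \to v_{w_{|W|-1}}^{(|W|-1)\bmod (k-1)} \to t$.

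For the forward direction, the three kinds of arcs of $K_n^k$, read along $P$, give in order: $w_1 \geq k+1$ and $w_1 \equiv 1 \pmod k$ from the initial arc; $w_{i+1} - w_i \geq k+1$ and $w_{i+1} - w_i \equiv 1 \pmod k$ from each internal arc; and $n - w_m \geq k+1$ and $n - w_m \equiv 1 \pmod k$ from the terminal arc $(v_{w_m}^0, t)$. Listed cyclically in $\Z_n$, these are exactly the hypotheses of Remark \ref{condW}(i), so $W_P$ defines a circulant minor of $C_n^k$ with $d = n_1 = 1$. Moreover the second coordinate satisfies $j_i = i \bmod (k-1)$, and $j_m = 0$ forces $m \equiv 0 \pmod{k-1}$, i.e., $|W_P| \equiv 1 \pmod{k-1}$; in other words $n_3 \equiv 1 \pmod{k-1}$. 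Because no arc $(v_0^0, t)$ is added in $K_n^k$ (the initial arcs go only to vertices with $j = 1$), we must have $m \geq k-1$, hence $n_3 = |W_P| \geq k$. Applying Lemma \ref{relminor} with $r = 1$ then certifies the minor is relevant, so $W_P \in \mathcal{W}(1,1)$ and clearly $0 \in W_P$.

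For the backward direction, I translate each defining property of $W \in \mathcal{W}(1,1)$ into the existence of arcs. Remark \ref{condW}(i) ensures that all cyclic gaps $w_{i+1} - w_i$ (together with $n - w_{|W|-1}$) are at least $k+1$ and $\equiv 1 \pmod k$; Lemma \ref{relminor} applied with $r=1$ combined with $n_3 \equiv 1 \pmod{k-1}$ yields $|W| - 1 \equiv 0 \pmod{k-1}$, so the final vertex $v_{w_{|W|-1}}^{(|W|-1)\bmod (k-1)} = v_{w_{|W|-1}}^0$ is eligible to emit the terminal $t$-arc. These observations show that each arc of the proposed path $P_W$ is present in $K_n^k$, so $P_W$ is a well-defined $v_0^0 t$-path.

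Finally, the two maps are inverses: every arc of $K_n^k$ strictly increases the first coordinate by at least $k+1$, so the $w_i$'s along a path are automatically in increasing order and coincide with the sorted enumeration of the associated set, giving $W_{P_W} = W$ and $P_{W_P} = P$. The main bookkeeping to watch will be the interplay between the cyclic wrap-around gap $n - w_{|W|-1}$ (encoded by the terminal arc into $t$) and the modular condition on the second coordinate $j$; once these are correctly identified with the conditions of Remark \ref{condW}(i) and Lemma \ref{relminor}, the rest is a direct dictionary between arc rules and cyclic gap conditions.
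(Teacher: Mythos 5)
Your proof is correct and follows essentially the same route as the paper: you match the arc conditions of $K_n^k$ with the gap conditions of Remark \ref{condW}(i), read off $n_3\equiv 1\pmod{k-1}$ from the layer (superscript) structure, and certify relevance via Lemma \ref{relminor} using $n_3\ge k$, exactly as the paper does (the paper counts $|V(P)\cap V^j|=\alpha$ per layer, which is your $m\equiv 0\pmod{k-1}$, $m\ge k-1$ observation). The only cosmetic point is that the congruence $|W|-1\equiv 0\pmod{k-1}$ already follows from the definition of $\mathcal{W}(1,1)$ without invoking Lemma \ref{relminor}, which you only need for the relevance bound.
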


\begin{proof}
 
Let $W\in \mathcal{W}(1,1)$ and assume that $W=\{i_j: j\in \Z_{n_3}\}\subset \Z_n$ with $0=i_0<i_1<\ldots<i_{n_3-1}\leq n-1$. Let $\alpha$ be the positive integer such that $|W|=n_3=\alpha(k-1)+1$.

Then, by remark \ref{condW} (i), $i_{j+1}- i_j= 1$ (mod $k$) and $i_{j+1}- i_j\geq k+1$ for all $j\in \Z_{n_3}$. Then,  
$$\left\{v_{i_j}^s\in V(K_n^k) \,:\, i_j\in W ,\,\, s=j\, (\mathrm{mod} \, (k-1)) \right\}\cup \left\{t\right\}$$
induces a $v_0^0 t$-path in $K_n^k$.

Conversely, let $P$ be a $v_0^0 t$-path in $K_n^k$. By construction, there exists a positive integer $\alpha$ such that 
$|V(P)\cap V^j|=\alpha$ for all $j\neq 0$ and $|V(P)\cap V^0|=\alpha+1$. 
Then, $|V(P)-\{t\}|=\alpha(k-1)+1$.

Now, if we define 
$$W=\{i\in \Z_n: v_i^j\in V(P)\;\, \mathrm{for}\; \mathrm{some } \; j\in \Z_{k-1}\}$$ then $|W|=\alpha(k-1)+1$ and from remark \ref{condW} (i) and lemma \ref{relminor}, it follows that $W\in \mathcal{W}(1,1)$. 
\end{proof}

\begin{thm}\label{sep}
Given $C^k_n$, the separation problem for inequalities corresponding to minors with parameters $d=n_1=1$ can be polynomially reduced to at most $n$ minimum weight path problems in an acyclic digraph.
\end{thm}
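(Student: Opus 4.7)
My plan is to reduce the separation problem for minor inequalities with parameters $d = n_1 = 1$ to $n$ minimum weight path problems in cyclic shifts of the acyclic digraph $K_n^k$.

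Given a fractional point $x^* \in \R^n$ to be separated, by lemma \ref{casopart} every relevant minor with parameters $d = n_1 = 1$ has $r = 1$, so it suffices to consider subsets $W \in \mathcal{W}(1,1)$. Using formulation (\ref{faceta2}), such a minor inequality is violated at $x^*$ if and only if
$$\sum_{i \in W} c^1(x^*_i) \;<\; L^{1,1}(x^*).$$
Since the right-hand side does not depend on $W$, separation reduces to computing $\min_{W \in \mathcal{W}(1,1)} \sum_{i \in W} c^1(x^*_i)$ and comparing it with the constant $L^{1,1}(x^*)$.

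To perform this minimization I would invoke the previous lemma: $v_0^0 t$-paths in $K_n^k$ correspond one-to-one with subsets $W \in \mathcal{W}(1,1)$ containing $0$. Assigning weight $c^1(x^*_i)$ to every vertex $v_i^j$ and weight $0$ to $t$, the total weight of any $v_0^0 t$-path equals $\sum_{i \in W} c^1(x^*_i)$ for the corresponding $W$. Since $K_n^k$ is acyclic, a minimum weight $v_0^0 t$-path can be found in time polynomial in $|V(K_n^k)| = O(nk)$ by a standard topological-order dynamic program. To cover all of $\mathcal{W}(1,1)$ and not only those $W$ containing $0$, I would exploit the cyclic symmetry of $C_n^k$: for each $h \in \Z_n$, the shift $i \mapsto i - h \pmod{n}$ maps $\{W \in \mathcal{W}(1,1) : h \in W\}$ bijectively onto $\{W \in \mathcal{W}(1,1) : 0 \in W\}$ and permutes the weight vector $c^1(x^*)$ accordingly. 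Equivalently, one builds an analogous acyclic digraph $K_n^k(h)$ in which the source $v_0^0$ is relabelled to play the role of the index $h$, and solves a shortest path problem there with the original weights. Because every $W \in \mathcal{W}(1,1)$ is nonempty and therefore contains at least one index $h$, the minimum of the $n$ resulting optimal path weights equals the global minimum, yielding the claimed reduction to $n$ acyclic shortest path problems.

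The main obstacle I anticipate is verifying that the bijection of the previous lemma transports cleanly under cyclic rotation and that each index of $W$ contributes its weight exactly once, with no double counting at the ``seam'' where the path $v_h^0 \to \dots \to t$ closes back to the representative $h$. This is handled by the layered structure of $K_n^k$: the source $v_h^0$ appears only once along any path, and $t$ carries weight $0$, so the node-weight-to-inequality-coefficient correspondence is preserved. Putting these pieces together gives a polynomial-time separation routine by solving $n$ shortest path problems on acyclic digraphs of size $O(nk)$ and comparing the overall minimum with $L^{1,1}(x^*)$.
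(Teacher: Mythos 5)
Your proposal is correct and follows essentially the same route as the paper: restrict to $\mathcal{W}(1,1)$ via lemma \ref{casopart}, use the path--subset correspondence in the acyclic digraph $K_n^k$ with weights $c^1(\hat{x}_i)$, compare the shortest-path value with $L^{1,1}(\hat{x})$, and handle the $n$ choices of the distinguished index by cyclic symmetry. The only cosmetic differences are that you place the weights on vertices rather than on arcs and spell out the rotation argument that the paper compresses into ``w.l.o.g.\ set $j=0$''.
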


\begin{proof} Let $\hat{x} \in \mathbb{R}^n$. We will show that the problem of deciding if, given $j\in \Z_n$, there exists $W\in \mathcal{W}(1,1)$ with $j\in W$ and such that $\hat{x}$ violates the inequality (\ref{faceta2}) can be reduced to a shortest path problem. W.l.o.g we set $j=0$.  

Consider the digraph $K_n^k$ and associate the weight $c^1(\hat{x_i})$ with every arc $(v_l^j, v_i^{j+1})\in A$ and the weight $c^1(\hat{x_0})$ with every arc $(v_l^0, t)\in A$.

Clearly, if $W$ is the subset corresponding to a $v_0^0 t$-path $P$ in $K_n^k$, the weight of $P$ is equal to $\sum_{i\in W} c^1(\hat{x_i})$.

Then, there exists $W\in \mathcal{W}(1,1)$ with $0\in W$ and such that $\hat{x}$ violates the inequality (\ref{faceta2}) if and only if 
the minimum weight on all $v_0^0 t$-paths in $K_n^k$ is less than $L^{1,1} (\hat{x})$. 
Since $K_n^k$ is acyclic, computing this minimum weight path can be done in polynomial time using for instance Bellman algorithm \cite{bel}.
\end{proof}

\medskip

In what follows we consider inequalities corresponding to minors with parameters $n_1=1$ and $d\geq 2$. More precisely, we will focus on a particular family of minors that we call \emph{alternated minors}.

\begin{definition}\label{alternated}
Let $W=\{i_s: s\in \Z_{d n_3}\}\subset \Z_n$ with $0\leq i_0<i_1<\dots < i_{dn_3-1}\leq n-1$ be a subset defining a relevant minor of $C_n^k$ with parameters $d\geq 2,n_1=1,n_2, n_3$. 
Then, the minor defined by $W$ is a $d$-\emph{alternated minor} if, for every $j\in \Z_d$, $W^j=\{i_{j+td}: t\in \Z_{n_3}\}$.
\end{definition}

\begin{example}
In the following two cases, let us consider the circulant minors of $C^6_{33}$ induced by $W=W^0\cup W^1$: 
\begin{enumerate}
\item[(i)]
$W^0=\{7,14,21\}$ and $W^1=\{8,15,22\}$, 
\item[(ii)]
$W^0=\{7,14,21\}$ and $W^1=\{12,25,32\}$. 
\end{enumerate}
Clearly, the case (i) corresponds to a $2$-alternated minor, while the case (ii) does not.
\end{example}
  
We are interested in characterizing the subsets $W\subset \Z_n$ that define alternated minors of $C^k_n$.
 
From now on, whenever $W\subset \Z_n$ and $|W|=m$ we assume that 
$W=\{i_s: s\in \Z_{m}\}$ with $0\leq i_0<i_1<\dots < i_{m-1}\leq n-1$.
In addition, according to $W$ we let $\delta_s=i_{s+1}-i_s$, for all $s\in \Z_m$.  

\begin{rmk}\label{deltassum}
If $W\subset\Z_n$ defines a $d$-alternated minor 
it can be checked that   
$$\sum_{s=0}^{d-1} \delta_s= i_d-i_0 =1\, (\mathrm{mod}\, k) \geq k+1.$$
\end{rmk}

If in addition to the necessary condition above, we have that $\delta_{s+d}=\delta_s$ for all $s\in \Z_{|W|}$, then $W$ clearly defines a $d$-alternated minor. However, not every alternated minor verifies this condition as the following example shows:

\begin{example}\label{ej1}
Consider the minor of $C^9_{47}$ defined by $W=W^0 \cup W^1$ where $W^0=\{0,10\}$ and $W^1=\{3,22\}$. It is easy to see that $W$ defines a $2$-alternated minor but $\delta_0= 3$ and $\delta_2= 12$. 
\end{example}

Although we have:

\begin{lemma}\label{deltasig}
Let $W\subset\Z_n$ be a subset defining a $d$-alternated minor.
Then, $\delta_{s+d}=\delta_s\, (\mathrm{mod}\, k)$ for all $s\in \Z_{|W|}$.
\end{lemma}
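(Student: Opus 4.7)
The plan is to reduce the claim to Remark \ref{condW}(i) applied individually to each sub-minor $W^j$. Since $W$ defines a $d$-alternated minor with $n_1=1$, Remark \ref{condW}(ii) guarantees that each $W^j$ separately defines a circulant minor with parameters $d^j=n_1^j=1$. By Definition \ref{alternated}, the elements of $W^j$ listed in increasing cyclic order are precisely $i_j, i_{j+d}, i_{j+2d}, \ldots, i_{j+(n_3-1)d}$.

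First I would apply Remark \ref{condW}(i) to each $W^j$: it asserts that every gap between cyclically consecutive elements of $W^j$ is congruent to $1$ modulo $k$. Expanding such a gap as a telescoping sum within the $\delta$ sequence yields, for every $j \in \Z_d$ and $t \in \Z_{n_3}$,
\[
\sum_{r=0}^{d-1} \delta_{j+td+r} \;\equiv\; 1 \pmod{k},
\]
with indices of $\delta$ taken in $\Z_{|W|}$ (and the underlying differences measured modulo $n$). Note that as $(j,t)$ ranges over $\Z_d \times \Z_{n_3}$, the starting index $s = j+td$ ranges over all of $\Z_{|W|}$, so this congruence reads simply $\sum_{r=0}^{d-1} \delta_{s+r} \equiv 1 \pmod{k}$ for every $s \in \Z_{|W|}$.

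To conclude, I would compare the congruence at two consecutive starting indices $s$ and $s+1$: both sums equal $1$ modulo $k$, and their difference telescopes to $\delta_{s+d} - \delta_s$, which must therefore be $\equiv 0 \pmod{k}$, yielding the claim. I do not expect a serious obstacle here; the only step worth double-checking is that the reindexing $(j,t) \leftrightarrow s$ works cleanly across the boundary $j+1 \equiv 0 \pmod d$ (where the shifted sum starts in $W^0$ rather than in $W^{j+1}$), but this is immediate once one notes that the $\delta$-indices, and hence also the pair $(j,t)$, are cyclic.
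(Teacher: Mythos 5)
Your proof is correct and is essentially the paper's argument: both reduce to the fact (via Remark \ref{condW} applied to each $W^j$) that $i_{s+d}-i_s\equiv 1\pmod{k}$ for every $s$, and then take the difference of this congruence at $s$ and $s+1$ to get $\delta_{s+d}\equiv\delta_s\pmod{k}$; your telescoped sums $\sum_{r=0}^{d-1}\delta_{s+r}$ are just this same quantity $i_{s+d}-i_s$ written out.
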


\begin{proof}
We know that, since $W$ defines a $d$-alternated minor, for any $s\in \Z_{|W|}$, $i_s$ and $i_{s+d}$ belong to $W^j$ for some $j\in \Z_d$. Then, by remark \ref{condW} (ii), 
$i_{s+d}-i_{s}=1$ (mod $k$).

Hence, for all $s\in \Z_{|W|}$, we have that
$$\delta_{s+d}-\delta_s= \left(i_{s+1+d}-i_{s+d}\right)-\left(i_{s+1}-i_s\right)= \left(i_{s+1+d}-i_{s+1}\right)-\left(i_{s+d}-i_s\right),$$
proving that $\delta_{s+d}=\delta_s\, (\mathrm{mod}\, k)$ for all $s\in \Z_{|W|}$.
\end{proof}

We also have the following result:
\begin{lemma}\label{restos}
Let $W\subset \Z_n$ be a subset defining a $d$-alternated minor with $|W|=d n_3$. 
Then,
\begin{enumerate}
\item[(i)] if $\sum_{s=j}^r \delta_s = 0\, (\mathrm{mod}\, k)$ for some $j\leq r <j+d$, then $r=j+d-2$ and $\delta_{r+1}=1$,  	
\item[(ii)] if $\delta_s=1\, (\mathrm{mod}\, k)$ for some $s\in\Z_d$ then $\delta_{s+td}=1$ for all $t\in \Z_{n_3}$.
\end{enumerate}
\end{lemma}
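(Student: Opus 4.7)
For part (i), I would rewrite the hypothesis as $i_{r+1} - i_j \equiv 0 \pmod{k}$. Letting $l = j \bmod d$, the $d$-alternated structure places both $i_j$ and $i_{j+d}$ in $W^l$, and since $W^l$ itself defines a minor with parameters $d = n_1 = 1$ by Remark \ref{condW}(ii), Remark \ref{condW}(i) yields $i_{j+d} - i_j = (a+1)k + 1$ for some integer $a \ge 0$. The case $r = j+d-1$ is excluded immediately, since then $i_{r+1} - i_j \equiv 1 \pmod{k}$, contradicting the hypothesis. Hence $r+1 < j+d$, so $i_{r+1}$ lies in $W^{l'}$ for some $l' \ne l$, and the disjointness of the associated dicycles forces $i_{r+1} \notin N^l$.

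The core of the argument is then to locate $i_{r+1}$ precisely inside $[i_j, i_{j+d}]$. The portion of $D^l$ going from $i_j$ to $i_{j+d}$ consists of $a$ arcs of length $k$ followed by a single arc of length $k+1$, so its intermediate vertices in $N^l$ are exactly $\{i_j+k, i_j+2k, \ldots, i_j+ak\}$. Writing $i_{r+1} - i_j = \mu k$ with $\mu \ge 1$, the bound $i_{r+1} < i_{j+d} = i_j + (a+1)k + 1$ forces $\mu \le a+1$; on the other hand $\mu \le a$ would place $i_{r+1}$ back in $N^l$, contradicting the previous step. Therefore $\mu = a+1$ and $i_{r+1} = i_{j+d} - 1$, so $\sum_{s=r+1}^{j+d-1} \delta_s = 1$. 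Since every $\delta_s \ge 1$, the sum contains a single term, giving $r = j+d-2$ and $\delta_{r+1} = 1$.

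For part (ii), I would reduce to (i). By Lemma \ref{deltasig}, $\delta_{s+td} \equiv \delta_s \equiv 1 \pmod k$ for every $t \in \Z_{n_3}$, so it suffices to prove $\delta_{s'} = 1$ at an arbitrary index $s'$ with $\delta_{s'} \equiv 1 \pmod k$ and conclude at each $s' = s + td$. Since $i_{s'+1}$ and $i_{s'+1-d}$ belong to the same $W^l$, Remark \ref{condW}(i) yields $\sum_{u=s'+1-d}^{s'} \delta_u \equiv 1 \pmod k$; subtracting $\delta_{s'} \equiv 1 \pmod k$ gives $\sum_{u=s'+1-d}^{s'-1} \delta_u \equiv 0 \pmod k$, a sum of $d-1$ consecutive $\delta$'s. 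Applying part (i) with $j = s'+1-d$ and $r = s'-1$ yields $\delta_{r+1} = \delta_{s'} = 1$, as desired. The main obstacle I anticipate is purely bookkeeping: juggling the three index scales (positions in $W$, positions within each $W^l$, and residues modulo $d$, all cyclically) and invoking the disjointness of the $N^l$'s at exactly the point in part (i) where the geometric contradiction is sharp.
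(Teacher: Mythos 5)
Your proposal is correct and follows essentially the same route as the paper's proof: for (i) you locate $i_{r+1}$ inside the segment of the dicycle of $W^l$ between the consecutive elements $i_j$ and $i_{j+d}$, use disjointness of the sets $N^j$ to exclude $i_{r+1}\in N^l$, and conclude $i_{r+1}=i_{j+d}-1$; for (ii) you combine Lemma \ref{deltasig} with the window of $d-1$ consecutive $\delta$'s ending just before $s+td$ and invoke (i), exactly as the paper does. The only difference is cosmetic (your explicit bookkeeping with $\mu$ and the observation that a sum of positive $\delta$'s equal to $1$ has a single term), so no further comment is needed.
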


\begin{proof}
In order to prove item (i), let $j,r\in \Z_{dn_3}$ with  $j\leq r <j+d$ and $\sum_{s=j}^r \delta_s=0\, (\mathrm{mod}\, k)$. Considering that  
$$\sum_{s=j}^r \delta_s=i_{r+1}-i_j,$$ 
we have that $i_{r+1}-i_j=0 \,(\mathrm{mod}\, k)$. 

Since $r+1\leq j+d$ and $i_{j+d}-i_j=1 (\mathrm{mod}\, k)$, then $r+1<j+d$ and $i_j<i_{r+1}<i_{j+d}$.  

W.l.o.g. let us assume that $j\in W^{j}$. Since $i_{j+d}\in W^{j}$, $i_{j+d}=i_j+tk+1$ for some positive integer $t$ and  $i_j+t'k \in N^j$ for all $1\leq t'\leq t-1$.  
Since $i_j<i_{r+1}<i_{j+d}$, $i_{r+1}-i_j=0 \,(\mathrm{mod}\, k)$ 
and $i_{r+1}\notin N^j$, then $i_{r+1}=i_j+tk$. 
Equivalently, $i_{r+1}=i_{j+d}-1$, $r=j+d-2$ and $\delta_{r+1}=1$.

To prove item (ii),  we only need to observe that if $\delta_s=1\, (\mathrm{mod}\, k)$ for some $s$, then using the previous lemma 
for all $t\in \Z_{n_3}$ we have,
$$\sum_{j=s+(t-1)d+1}^{s+td-1} \delta_j =0\, (\mathrm{mod}\, k),$$
and by item (i), $\delta_{s+td}=1$ for all $t\in \Z_{n_3}$. 
\end{proof}

The previous results describe necessary conditions that the values in $\{\delta_s:s\in\Z_d\}$ associated with a subset $W\subset \Z_n$ must satisfy in order to define a $d$-alternated minor. Actually, we will see that these conditions characterize these subsets.  To this purpose, let us define the following:

\begin{definition}\label{ad}
Given $k\geq 4$ and $2\leq d\leq k-2$, let $R_{d,k}\subset \Z_k^d$ such that
$(a_0,a_1, \ldots,a_{d-1})\in R_{d,k}$ if and only if 
\begin{enumerate}
\item[(i)] $\sum_{s=0}^{d-1} a_s=1\, (\mathrm{mod}\, k)\geq k+1$,
\item[(ii)]
if $\sum_{s=j}^r a_s = 0\, (\mathrm{mod}\, k)$ for some $0\leq j\leq r\leq d-1$ then $r=j+d-2$ and $j\in \{0,1\}$. 	
\end{enumerate}
\end{definition}

\begin{rmk}\label{Rdk} Observe that:
\begin{enumerate}
	\item[(i)] $R_{2,k}=\{(a_0,a_1)\in \Z_k^2:\, a_0+a_1=1\, (\mathrm{mod}\, k)\}$ and 
\item[(ii)] in general, $|R_{d,k}|=O(k^d)$.
\end{enumerate}
\end{rmk}

\smallskip

So, we have the following characterization for $d$-alternated minors.

\begin{thm}\label{alternados}
Let $d\geq 2$, $W=\{i_s: s\in \Z_{dn_3}\}\subset \Z_n$ and $W^j=\{i_{j+td}: t\in \Z_{n_3}\}$, for every $j\in \Z_{d}$.
Then, 
$W$ defines a $d$-alternated minor of $C^k_n$ if and only if there exists $a\in R_{d,k}$ such that:
\begin{enumerate}
\item[(i)]
$a_j=\delta_{j+td}\, (\mathrm{mod}\, k)$ for all $j\in \Z_d$, $t\in \Z_{n_3}$ and 
\item[(ii)]
if $a_j=1$ for some $j\in\Z_d$ then $\delta_{j+td}=1$ for all $t\in \Z_{n_3}$.
\end{enumerate}
\end{thm}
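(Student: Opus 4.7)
The statement is a biconditional; I would treat the two directions separately, with the forward implication essentially a packaging of the preceding lemmas.

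$(\Rightarrow)$: Suppose $W$ defines a $d$-alternated minor, and define $a_j \in \Z_k$ by $a_j \equiv \delta_j \pmod k$ for each $j \in \Z_d$. Condition (i) of the theorem is then exactly Lemma \ref{deltasig}, while condition (ii) of the theorem is exactly Lemma \ref{restos}(ii). To verify $a \in R_{d,k}$, condition (i) of Definition \ref{ad} comes from Remark \ref{deltassum}, and condition (ii) of Definition \ref{ad} follows by lifting a zero-sum $\sum_{s=j}^{r} a_s \equiv 0 \pmod k$ with $0 \leq j \leq r \leq d-1$ to the corresponding zero-sum of $r-j+1$ consecutive $\delta$'s; Lemma \ref{restos}(i) then forces the window length to be $d-1$, so $r = j + d - 2$ and consequently $j \in \{0,1\}$ (since $r \leq d-1$).

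$(\Leftarrow)$: Suppose $a \in R_{d,k}$ satisfies (i) and (ii) of the theorem; I must show $W$ defines a $d$-alternated minor. By Remark \ref{condW}(ii) combined with Theorem \ref{minorsgrafo} this reduces to two tasks: (a) show each $W^j$ individually defines a circulant minor with $d^j = n_1^j = 1$, and (b) show the $d$ induced dicycles $N^j$ in $G(C_n^k)$ are pairwise vertex-disjoint. Task (a) is immediate from Remark \ref{condW}(i): consecutive differences within $W^j$ equal $\sum_{s=0}^{d-1} \delta_{j+td+s} \equiv \sum_{s=0}^{d-1} a_s \equiv 1 \pmod k$ by condition (i) of the theorem and condition (i) of Definition \ref{ad}, and a sum of $d \geq 2$ positive integers that is $\equiv 1 \pmod k$ is automatically at least $k+1$.

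The main obstacle is task (b). I would argue by contradiction: every vertex of $N^j$ is either an element of $W^j$ or an intermediate vertex of the form $i_s + qk$ for some $s \in I_j := \{s \in \Z_{dn_3}: s \equiv j \pmod d\}$ and $1 \leq q \leq p_s$, where $p_s k + (k+1) = i_{s+d} - i_s$ counts the length-$k$ arcs in the portion of the dicycle between consecutive elements of $W^j$. A collision $N^j \cap N^{j'} \neq \emptyset$ with $j \neq j'$ therefore amounts to the existence of $s \in I_j$, $s' \in I_{j'}$ with $s < s' < s+d$ satisfying $\sum_{u=s}^{s'-1} \delta_u \equiv 0 \pmod k$ together with $\sum_{u=s'}^{s+d-1} \delta_u \geq k+1$. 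Condition (i) of the theorem converts the first congruence into a zero-sum of at most $d-1$ consecutive terms $a_{(\cdot) \bmod d}$; condition (ii) of Definition \ref{ad} (combined with condition (i) of Definition \ref{ad} to handle the wrap-around) forces the window to have length exactly $d-1$ with some entry $a_{j^*} = 1$, and then condition (ii) of the theorem pins $\delta_{s+d-1} = 1$, giving $i_{s+d} - i_{s'} = 1$, which contradicts the lower bound $\geq k+1$. The delicate points I expect to encounter are the wrap-around sub-cases and the symmetric situation where the common vertex is an intermediate vertex of both dicycles; the bookkeeping in these cases parallels that used in the proof of Lemma \ref{restos}.
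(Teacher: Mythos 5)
Your forward direction coincides with the paper's proof (it is exactly the assembly of Remark \ref{deltassum}, Lemma \ref{deltasig} and Lemma \ref{restos}), and your converse strategy is genuinely different from the paper's: you attack the disjointness of all pairs $N^j$, $N^{j'}$ at once, via the correct observation that a common vertex yields indices $s<s'<s+d$ with $\sum_{u=s}^{s'-1}\delta_u\equiv 0\pmod k$ and $i_{s+d}-i_{s'}\geq k+1$, whereas the paper reduces each pair $\{W^0,W^r\}$ to the case $d=2$. However, the step where you dispose of this zero-sum window has a genuine gap in precisely the sub-case you defer as delicate bookkeeping: when the residues $s,s+1,\dots,s'-1$ taken modulo $d$ wrap around past $d-1$, Definition \ref{ad}(ii) says nothing (it only constrains windows with $0\le j\le r\le d-1$; note $r=j+d-2$ together with $r\le d-1$ already forces $j\in\{0,1\}$, so the definition is genuinely non-cyclic), and Definition \ref{ad}(i) only yields that the complementary non-wrapping window sums to $1\pmod k$, which forces neither length $d-1$ nor an entry equal to $1$. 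Concretely, $a=(3,2,6,4)\in R_{4,7}$ has the wrapping zero sum $a_3+a_0\equiv 0\pmod 7$, window length $2\neq d-1$, and no coordinate equal to $1$; realizing these residues with $W=\{0,3,5,11\}\subset\Z_{15}$ for $C^7_{15}$ (so $n_3=1$, $\delta=(3,2,6,4)$, hypothesis (i) holds by construction and (ii) vacuously), one finds $N^1=\{3,10\}$ and $N^3=\{11,3\}$, i.e.\ the very collision you must exclude does occur. So the wrap-around case is not mere bookkeeping parallel to Lemma \ref{restos}: the implication you invoke there is false for $R_{d,k}$ as literally defined, and your chain of deductions cannot be completed from the stated hypotheses.

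The comparison with the paper is instructive. The paper's argument never meets a wrapping window because it proves $N^0\cap N^r=\emptyset$ only for pairs containing $W^0$, where the two relevant windows $\{0,\dots,r-1\}$ and $\{r,\dots,d-1\}$ do not wrap, and treats the remaining pairs as ``w.l.o.g.''; your head-on approach makes visible that this anchoring matters, since membership in $R_{d,k}$ is not invariant under cyclic relabelling of the parts $W^j$. To close your converse you would need the zero-sum restriction of Definition \ref{ad}(ii) to hold for cyclically consecutive windows of length at most $d-1$ (equivalently, for every cyclic shift of $a$), or a separate argument excluding wrapping zero-sum windows; neither follows from $a\in R_{d,k}$ together with conditions (i) and (ii) as stated, so as written the converse direction is not proved.
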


\begin{proof}
Let $W$ be a subset defining a $d$-alternated minor of $C^k_n$. For every $j\in \Z_d$, let $a_j \in\Z_k$ such that $a_j= \delta_j\, (\mathrm{mod}\, k)$. 

We first prove that $a=(a_j)_{j\in \Z_d}\in R_{d,k}$. If $d=2$, it is clear that $a=(a_0,a_1)\in R_{2,k}$.

Let $d\geq 3$. By definition of $a$ and remark \ref{deltassum}, 
$$\sum_{s=0}^{d-1} a_s=1\, (\mathrm{mod}\, k)\geq k+1$$
and condition (i) in definition \ref{ad} is verified.
Moreover, by lemma \ref{restos} (i), if $\sum_{s=j}^r a_s=0\, (\mathrm{mod}\, k)$ for some $0\leq j\leq r \leq d-1$ then $j+d=r+2$. Since $r+2\leq d+1$ then $j\in \{0,1\}$ and condition (ii) in definition \ref{ad} holds. Therefore, $a\in R_{d,k}$.

Moreover, from definition and lemma \ref{restos}, $a$ satisfies assumption (i) and (ii).

\medskip\smallskip

Conversely, let $a\in R_{d,k}$ satisfying assumptions (i) and (ii).
Since, for any $s\in \Z_{dn_3}$, $i_{s+d}-i_s= \sum_{j=s}^{s+d-1} \delta_j$, by definition \ref{ad} (i) it holds that $i_{s+d}-i_s= \sum_{j=0}^{d-1} a_j =1  \, (\mathrm{mod}\, k)$ and then $i_{s+d}-i_s=1 \, (\mathrm{mod}\, k) \geq k+1$. Then, from remark \ref{condW} (i), each $W^j$ induces a circulant minor with parameters $d=n_1=1$. 
Again from remark \ref{condW}, we only need to prove that subsets $N^j$, $j\in \Z_d$ are mutually disjoint.

Let us start with the case $d=2$. 
Suppose that there exists $v\in N^0\cap N^1$. W.l.o.g. we set  $i_1< v\leq i_2$. Then, since $v\in N^0$, $v-i_0=0$ (mod $k$) and since $v\in N^1$, $v-i_1=0$ (mod $k$). Moreover, as $i_2-i_0=1$ (mod $k$), then $i_2-v=1$ (mod $k$) and $\delta_1= i_2-i_1=(i_2-v)+(v-i_1)=1$ (mod $k$). Hence, $a_1=1$. By assumption (ii), $\delta_{1+t 2}=1$ for all $t\in \Z_{n_3}$ and it is not hard to check that, in this case, $N^0\cap N^1=\emptyset$, which is a contradiction. 

Let $d\geq 3$. W.l.o.g., it is enough to prove that $N^0\cap N^r=\emptyset$, for any $r\in \Z_d$, $r\neq 0$. To this end let $\widetilde{W}=W^0\cup W^r$. We will see that $\widetilde{W}$ defines $2$-alternated minor of $C^k_n$. 
By using the same arguments as in the case $d=2$, we only need to find $\tilde{a}=(\tilde{a}_0,\tilde{a}_1)\in R_{2,k}$ satisfying assumptions (i) and (ii) for $\widetilde{W}$ with $\tilde{\delta}_{2t}=i_{r+td}-i_{td}$ and $\tilde{\delta}_{1+2t}=i_{(t+1)d}-i_{r+td}$ for all $t\in \Z_{n_3}$. 

Let $\tilde{a}_0, \tilde{a}_1\in\Z_k$ be  such that $\tilde{a}_0=i_{r}-i_{0}\, (\mathrm{mod}\, k)$ and  $\tilde{a}_1=i_{d}-i_{r}\, (\mathrm{mod}\, k)$. Clearly, $\tilde{a}=(\tilde{a}_0,\tilde{a}_1)\in R_{2,k}$ and verifies assumption (i).

If $\tilde{a}_0=1$, $\tilde{a}_1 = 0$ i.e. 
$\sum_{i=r}^{d-1} \delta_i=0\, (\mathrm{mod}\, k)$. Then, $\sum_{i=r}^{d-1} a_i=0\, (\mathrm{mod}\, k)$.  
Hence, since $a\in R_{d,k}$ we have that $r=1$ and $\tilde{a}_0=a_0=1$. By hypothesis, $\tilde{\delta}_{2t}=\delta_{2t}=1$ for all $t\in \Z_{n_3}$.

If $\tilde{a}_1=1$, $\tilde{a}_0 = 0$, i.e. $\sum_{i=0}^{r-1} \delta_i=0\, (\mathrm{mod}\, k)$ and  $\sum_{i=0}^{r-1} a_i=0\, (\mathrm{mod}\, k)$. Hence, since $a\in R_{d,k}$, $r-1=d-2$ and $\tilde{a}_1 =a_{d-1}=1$. By hypothesis, 
$\tilde{\delta}_{1+2t}=\delta_{d-1+td}=1$ for all $t\in \Z_{n_3}$. Therefore, $\tilde{a}$ satisfies assumption (ii) and the proof is complete.
\end{proof}

Given $C^k_n$ and $d,r$ integer numbers such that $2\leq d\leq k-2$, $1\leq r\leq k-d$ we define $\mathcal{A}(d,r)$ as the set of all subsets $W\subset \Z_n$ defining a $d$-alternated minor of $C^k_n$ such that $|W|=d n_3$ with $n_3=r$ (mod $(k-d)$). Moreover, if $a\in R_{d,k}$, we define the separation problem $C_n^k$-$SP(d,r,a)$ as follows: 

\medskip

\quad\begin{minipage}{.8\textwidth}\label{psdj}\begin{tabular}{rl}
 &\\
\em INSTANCE: &
$\hat{x}\in \R^n$ \\
\em QUESTION: &
Is there $W\in \mathcal{A}(d,r)$ such that $0\in W$, \\
&$\delta_s=a_s\, (\mathrm{mod} \,k)$ for all $s\in \Z_d$ and \\
&$\sum_{i\in W} c^d(\hat{x_i})  <  L^{d,r}(\hat{x})$ ? \\
&
\end{tabular}\end{minipage}\par\medskip 

We will reduce $C^k_n$-$SP(d,r,a)$ to a shortest path problem in the digraph $K_n^k(d,r,a)$ with vertex set  $$V=\left(\bigcup_{i\in \Z_d,\, j\in \Z_{k-d+r}} V_j^i\right) \cup \{t\}$$
where
$V_j^i=\{v^i_j(p) :p\in \Z_n\}$ for all $i\in \Z_d$, $j\in \Z_{k-d+r}$. 

\medskip

The set of arcs $A$  of $K_n^k(d,r,a)$ is defined as follows: first consider in $A$ the arcs $(v_0^0(0),v_0^{1}(p))$ for all $p=a_0$ (mod $k$) and $1\leq p\leq n-1$
when $a_0\neq 1$, and $(v_0^0(0),v_0^{1}(1))$ when $a_0=1$.

Then consider in a recursive way:

\begin{itemize}	
	
\item for each $(v,v_j^{i}(p))\in A$ with $1\leq i\leq d-2$ 
	
\hspace{0.5cm}	if $a_i\neq 1$ then add $(v_j^i(p),v_j^{i+1}(q))$, for all $q$ such that  $p+a_i\leq q\leq n-1$ and $q-p=a_i$ (mod $k$), else add	$(v_j^i(p),v_j^{i+1}(p+1))$.

\item for each $(v,v_j^{d-1}(p))\in A$ with $j\leq k-d+r-2$ 

\hspace{0.5cm} if $a_{d-1}\neq 1$ then add  $(v_j^{d-1}(p),v_{j+1}^{0}(q))$,  for all $p+a_{d-1}\leq q\leq n-1$ and $q-p=a_{d-1}$ (mod $k$), else add  $(v_j^{d-1}(p),v_{j+1}^{0}(p+1))$.

\item for each $(v,v_{k-d-1}^{d-1}(p))\in A$ 
  
\hspace{0.5cm} if $a_{d-1}\neq 1$ then add $(v_{k-d-1}^{d-1}(p),v_{0}^{0}(q))$, for all $p+a_{d-1}\leq q\leq n-1$ and $q-p=a_{d-1}$ (mod $k$), 
else add $(v_{k-d-1}^{d-1}(p),v_{k-d-1}^{0}(p+1))$.
\end{itemize}

Finally, consider the following arcs: for each $(v,v_{k-d+r-1}^{d-1}(p))\in A$, if $a_{d-1}\neq 1$ then add $(v_{k-d+r-1}^{d-1}(p),t)$, for all $p\leq n-1$ and $n-p=a_{d-1}$ (mod $k$), else add  $(v_{k-d+r-1}^{d-1}(p),t)$ only when $p=n-1$.

\medskip

In figure \ref{fig:C294} we scketch the digraph $K_{29}^4(2,1,(3,2))$ where only the arcs corresponding to two $v_0^0(0) t$-paths are drawn.

\begin{figure}[h]
	\centering
		\includegraphics[width=0.37\textwidth]{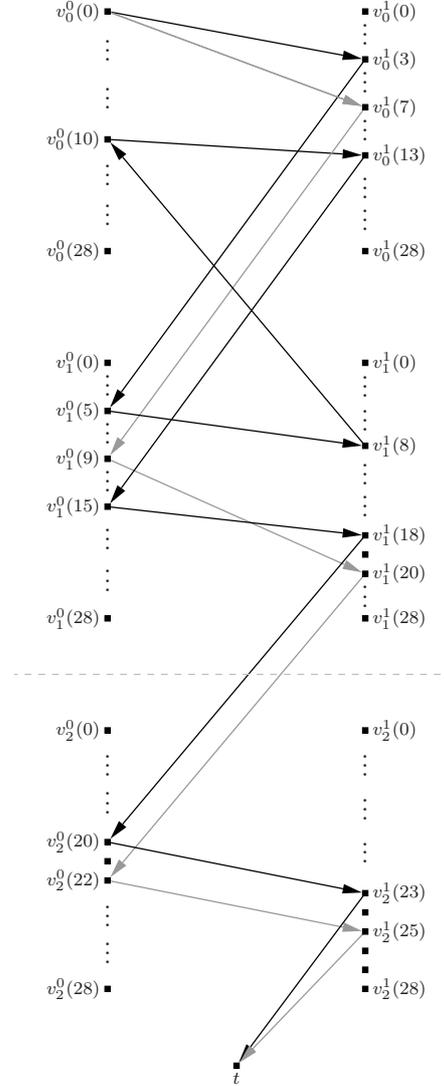}
	\caption{Two $v_0^0(0) t$-paths in the digraph $K_{29}^4(2,1,(3,2))$.}
	\label{fig:C294}
\end{figure}

Note that, by construction, if $(v_j^i (p), v_l^s(q)\in A$ then $q>p$. Hence, $K_n^k(d,r,a)$ is acyclic.

\medskip

We have the following result:

\begin{lemma}
There is a one-to-one correspondence between $v_0^0(0) t$-paths in $K_n^k(d,r,a)$ and subsets $W\in \mathcal{A}(d,r)$ with $0\in W$. 
\end{lemma}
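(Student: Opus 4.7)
The plan is to construct the bijection explicitly in both directions by exploiting the layered structure of $K_n^k(d,r,a)$. First describe the anatomy of a $v_0^0(0) t$-path $P$: by the recursive construction of the arcs, $P$ traverses some number $s \geq 1$ of complete super-rounds (each consisting of the rows $j = 0, 1, \ldots, k-d-1$ taken in order, every row contributing $d$ vertices through the phase $i = 0, 1, \ldots, d-1$), then at the final vertex $v_{k-d-1}^{d-1}(\cdot)$ of the $s$-th super-round it branches into the tail $j = k-d, k-d+1, \ldots, k-d+r-1$ (again with $d$ vertices per row), and finally reaches $t$. Hence $P$ has exactly $dn_3$ non-$t$ vertices, where $n_3 := s(k-d) + r$ satisfies $n_3 \equiv r \pmod{k-d}$.

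For the forward direction, given $P$, let $W$ be the set of third coordinates of its non-$t$ vertices; enumerating them in visit order as $v^{\sigma \bmod d}_{j(\sigma)}(i_\sigma)$, $\sigma = 0, \ldots, dn_3 - 1$, gives $W = \{i_0 = 0 < i_1 < \cdots < i_{dn_3-1}\}$ (the third coordinate strictly increases along each arc). The arc templates of $K_n^k(d,r,a)$ force $i_{\sigma+1} - i_\sigma \equiv a_{\sigma \bmod d} \pmod{k}$ for every $\sigma \leq dn_3 - 2$ (with $i_{\sigma+1} - i_\sigma = 1$ whenever $a_{\sigma \bmod d} = 1$), and the closing arc to $t$ additionally forces $n - i_{dn_3-1} \equiv a_{d-1} \pmod{k}$. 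Setting $\delta_\sigma$ to be the cyclic gaps of $W$ in $\Z_n$, the tuple $a \in R_{d,k}$ then satisfies both hypotheses of Theorem~\ref{alternados}, so $W$ defines a $d$-alternated minor with $|W| = dn_3$; that is, $W \in \mathcal{A}(d,r)$ with $0 \in W$.

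Conversely, given $W = \{i_0 < i_1 < \cdots < i_{dn_3-1}\} \in \mathcal{A}(d,r)$ whose $\delta$-pattern matches $a$ and with $i_0 = 0$, recover $s$ from $n_3 = s(k-d) + r$ and send each $i_\sigma$ to the vertex $v^{\sigma \bmod d}_{j(\sigma)}(i_\sigma)$, where $j(\sigma)$ is determined from $\mu := \lfloor \sigma/d \rfloor$ by $j(\sigma) = \mu \bmod (k-d)$ when $\mu < s(k-d)$ and $j(\sigma) = k-d + \mu - s(k-d)$ otherwise; append $t$. Theorem~\ref{alternados} guarantees that the gaps $\delta_\sigma$ match the arc templates (including the forced $\delta_\sigma = 1$ entries whenever $a_{\sigma \bmod d} = 1$), so the sequence is a genuine $v_0^0(0) t$-path in $K_n^k(d,r,a)$. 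The main obstacle is verifying mutual inversion: at each branching vertex $v^{d-1}_{k-d-1}(p)$ the construction offers a choice between resetting to $v^0_0(\cdot)$ and advancing to $v^0_{k-d}(\cdot)$, and one must show this choice is forced by $W$. The key observation is that $s = (|W|/d - r)/(k-d)$ is completely determined by the size of $W$, so the path must take exactly $s$ super-rounds before entering the tail; combined with the fact that the third coordinate strictly increases along any arc (so distinct paths have distinct third-coordinate sets), this yields the bijection.
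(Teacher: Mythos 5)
Your proof is correct and follows essentially the same route as the paper: you read off $W$ as the set of third coordinates of the non-$t$ vertices of a path and, conversely, send $i_{j+hd}\in W^j$ to the vertex whose subscript is determined by $h$ modulo $k-d$ (with the tail adjustment), exactly as in the paper's assignment $i_{j+hd}\mapsto v_{t(h)}^j(i_{j+hd})$. You are in fact more explicit than the paper on two points it leaves implicit, namely the appeal to Theorem \ref{alternados} to certify $W\in\mathcal{A}(d,r)$ and the observation that the number of super-rounds, hence the path itself, is forced by $|W|$.
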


\begin{proof}
Let $W\in \mathcal{A}(d,r)$. Then, for all $j\in\Z_d$, $W^j=\{i_{j+hd}: h\in \Z_{n_3}\}$ and $n_3=\alpha(k-d)+r$ for some positive integer $\alpha$.  

For each $h\in \Z_{n_3}$ we define $t(h)$ such that $t(h)=h\, (\mathrm{mod} \, (k-d))$ and
\begin{itemize}
	\item if $0\leq h\leq \alpha(k-d)-1$ then $t(h)\in \Z_{k-d}$ 
	\item if $\alpha(k-d)\leq h\leq \alpha(k-d)+r-1$ then $k-d\leq t(h)\leq k-d+r-1$. 
\end{itemize}

Then, we associate with every $i_{j+hd}\in W^j$, the vertex $v_{t(h)}^j(i_{j+hd})$ and 
$$\left\{v_{t(h)}^j(i_{j+hd}): j\in\Z_d, h\in\Z_{n_3}\right\}\cup \left\{t\right\}$$ induces a $v_0^0(0) t$-path in $K_n^k(d,r,a)$.

\medskip

Conversely, let $P$ be a $v_0^0(0) t$-path in $K_n^k(d,r,a)$. By construction, there exists a positive integer $\alpha$ such that 
$|V(P)\cap V^j|=\alpha(k-d)+r$ for all $j\in\Z_d$. 
Hence, if we define 
$$W^j=\{p\in \Z_n: v_i^j(p)\in V(P)\cap V^j\;\, \mathrm{for}\; \mathrm{some } \; j\in \Z_{k-d}, i\in\Z_{k-d+r}\},$$ 
then $|W^j|=\alpha(k-d)+r$. Clearly $W=\cup_{j\in\Z_d}W^j\in \mathcal{A}(d,r)$.
\end{proof}

\begin{thm} \label{sepd}
The $C_n^k$-$SP(d,r,a)$ can be polynomially reduced to a shortest path problem in a weighted acyclic digraph.
\end{thm}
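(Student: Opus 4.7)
The plan is to mirror the proof of Theorem \ref{sep}, using the bijection in the preceding lemma to recast $C^k_n$-$SP(d,r,a)$ as a weighted shortest path problem on the acyclic digraph $K^k_n(d,r,a)$. The lemma already identifies $v_0^0(0) t$-paths with subsets $W \in \mathcal{A}(d,r)$ such that $0\in W$, which is precisely the class of subsets over which $C^k_n$-$SP(d,r,a)$ optimizes. So all that remains is to attach costs to the arcs so that the total weight of a path equals $\sum_{i\in W} c^d(\hat{x}_i)$, and then invoke a standard shortest-path routine.

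Concretely, I would assign to every arc $(u, v^i_j(q))$ the weight $c^d(\hat{x}_q)$, i.e.\ the cost associated with the index $q\in \Z_n$ encoded by its head, and to every arc $(u,t)$ entering the terminal vertex the weight $c^d(\hat{x}_0)$, accounting for the element $0\in W$ represented by the source $v_0^0(0)$ rather than by the head of any internal arc. Along a $v_0^0(0) t$-path $P$ whose sequence of heads (excluding $t$) is $v_{j_1}^{i_1}(q_1),\ldots,v_{j_m}^{i_m}(q_m)$, this weighting gives total cost
\[
c^d(\hat{x}_{q_1})+\cdots+c^d(\hat{x}_{q_m})+c^d(\hat{x}_0)=\sum_{q\in W(P)} c^d(\hat{x}_q),
\]
where $W(P)$ is the subset of $\mathcal{A}(d,r)$ associated with $P$ via the preceding lemma.

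It will then follow that the answer to $C^k_n$-$SP(d,r,a)$ is affirmative if and only if the minimum weight of a $v_0^0(0) t$-path in $K^k_n(d,r,a)$ is strictly less than $L^{d,r}(\hat{x})$. Since $K^k_n(d,r,a)$ is acyclic — each arc strictly increases the index $p$ encoded in its head — Bellman's algorithm \cite{bel} computes this shortest path in time polynomial in the number of vertices, which is $O(nd(k-d+r))$, yielding the desired reduction.

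The only delicate step, and the one I expect to require the most bookkeeping, is verifying that the weighting is consistent across the various families of arcs used to build $K^k_n(d,r,a)$: arcs within a block $V^i_j \to V^{i+1}_j$, arcs wrapping $V^{d-1}_j \to V^0_{j+1}$ or $V^{d-1}_{k-d-1} \to V^0_{k-d-1}$, and the terminal arcs into $t$, each with a special case when $a_s = 1$. In each case one needs to check that the head index $q$ really does correspond to the next element of $W$ entering the path, so that every element of $W(P)$ contributes exactly once and the element $0$ is correctly accounted for via the unique arc into $t$.
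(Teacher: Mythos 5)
Your proposal matches the paper's proof essentially verbatim: you assign to each internal arc the weight $c^d(\hat{x}_q)$ given by the index encoded in its head, weight every arc into $t$ by $c^d(\hat{x}_0)$, invoke the bijection of the preceding lemma so that the weight of a $v_0^0(0)\,t$-path equals $\sum_{i\in W} c^d(\hat{x}_i)$, and conclude with a shortest-path computation (Bellman) on the acyclic digraph, comparing against $L^{d,r}(\hat{x})$. This is exactly the paper's argument, so the proposal is correct and takes the same approach.
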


\begin{proof}
Let us consider the digraph $K_n^k(d,r,a)$ and assign to every arc $(v_i^j(q), v_l^m(p))\in A$, the weight $c^d(\hat{x}_p)$  and the weight $c^d(\hat{x}_0)$ to every arc $(v_l^{d-1}(q), t)\in A$. 

Clearly, if $W$ is the subset corresponding to a $v_0^0(0) t$-path $P$ in $K_n^k(d,r,a)$, the weight of $P$ is equal to $\sum_{i\in W} c^d(\hat{x_i})$. 

Then, $\hat{x}$ violates an inequality corresponding to a circulant minor of $C^k_n$ with parameters $d$ and $n_1=1$ and subset $W$ with $0\in W$ if and only if 
the minimum weight of all $v_0^0(0) t$-paths $P$ in $K_n^k(d,r,a)$ is less than $L^{d,r} (\hat{x})$. 

Since $K_n^k(d,r,a)$ is acyclic, computing this minimum path can be done in polynomial time using for instance Bellman algorithm \cite{bel}.
\end{proof}

Finally, the separation problem for inequalities corresponding to alternated minors can be formally stated as

\medskip\quad\begin{minipage}{.8\textwidth}\label{ps}\begin{tabular}{rl}
&\\
\em INSTANCE: &
$\hat{x}\in \R^n$ \\
\em QUESTION: & Is there an alternated minor whose corresponding  \\
&inequality is violated by $\hat{x}$ ? \\
&
\end{tabular}\end{minipage}\par\medskip

Hence, from theorems \ref{sep} and \ref{sepd} and remark \ref{Rdk} (ii), we have 

\begin{thm}
For a fixed $k$, the separation problem for inequalities corresponding to alternated minors of $C^k_n$ can be solved in polynomial time.
\end{thm}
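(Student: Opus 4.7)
The plan is to observe that the result is essentially an enumerative corollary of the two reductions already established in Theorems \ref{sep} and \ref{sepd}. Separating alternated minor inequalities means looking for some $W$ defining an alternated minor such that the inequality (\ref{faceta2}) is violated at the input point $\hat{x}$. Since alternated minors are classified by the parameters $(d,r,a)$ with $d=1$ corresponding to $\mathcal{W}(1,1)$ and $d\geq 2$ corresponding to $\mathcal{A}(d,r)$ together with a choice of $a\in R_{d,k}$, the separation problem decomposes into a union of finitely many parametric subproblems, each of which is polynomial by the prior theorems.

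More precisely, the procedure I would follow is: first loop over $d\in\{1,2,\dots,k-2\}$. For $d=1$, by Lemma \ref{casopart} the only relevant $r$ is $r=1$, and Theorem \ref{sep} reduces the corresponding separation to at most $n$ shortest-path computations on the acyclic digraph $K_n^k$. For $d\geq 2$, loop over $r\in\{1,\dots,k-d-1\}$ and over every $a\in R_{d,k}$; for each such triple $(d,r,a)$ apply Theorem \ref{sepd}, which reduces $C_n^k$-$SP(d,r,a)$ to a single shortest-path computation in the acyclic digraph $K_n^k(d,r,a)$. To account for the fact that both reductions fix $0\in W$ while $W$ can have arbitrary minimum element, I would additionally iterate over the $n$ cyclic shifts of $\hat{x}$ (equivalently, over the choice of which index is identified with the base vertex $v^0_0$ or $v^0_0(0)$), using the circulant symmetry of $C^k_n$. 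Declare a violated inequality as soon as any one of these shortest paths has weight strictly less than the corresponding right-hand side $L^{d,r}(\hat{x})$; if none does, certify that no alternated minor inequality is violated.

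To verify polynomiality for fixed $k$, the count is straightforward: there are $O(k)$ choices of $d$, $O(k)$ choices of $r$, and by Remark \ref{Rdk}(ii) at most $O(k^d)\leq O(k^{k-2})$ choices of $a$, so the total number of parametric subproblems is bounded by a constant depending only on $k$. Combined with the $n$ cyclic shifts and the fact that each acyclic-digraph shortest-path call runs in time polynomial in the digraph size (which is $O(nk^2)$, hence polynomial in $n$ for fixed $k$), the overall running time is polynomial in $n$. The Bellman algorithm cited in the proofs of Theorems \ref{sep} and \ref{sepd} can be reused verbatim here.

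There is no real obstacle; the only subtlety worth spelling out is that the two prior reductions fix $0\in W$, so one must explicitly handle the $n$ rotations to cover every possible $W$, and one must remember to consider the $d=1$ case alongside the $d\geq 2$ cases. Once the parameter enumeration is organized this way, the result falls out immediately from Theorems \ref{sep} and \ref{sepd} together with Remark \ref{Rdk}(ii).
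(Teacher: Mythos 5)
Your proposal is correct and follows essentially the same route as the paper, which obtains this theorem directly from Theorems \ref{sep} and \ref{sepd} together with the bound $|R_{d,k}|=O(k^d)$ of Remark \ref{Rdk}(ii), the enumeration over $(d,r,a)$ and the $n$ choices of base index being exactly the intended constant-times-$n$ family of acyclic shortest-path computations. Your explicit remark about handling the normalization $0\in W$ via the $n$ rotations is a detail the paper leaves implicit but is consistent with its argument.
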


\section{Conclusions}
In this paper we study the description of the set covering polyhedron of circulant matrices. We
associate a valid inequality with each circulant minor and we show (theorem \ref{teo}) that minor inequalities include an important family of facet defining inequalities. We also give a polynomial time algorithm to separate a subfamily of them. 

As it was mentioned at the beginning of the paper, the dominating set polyhedron of web graphs is the set covering polyhedron of certain circulant matrices. Then, the results obtained so far have direct consequences on the dominating set problem on web graphs. 

We also state some interesting open questions. 

Theorem \ref{teo} gives sufficient conditions for a minor inequality in order to define a facet.  
In this way, we give a stronger conjecture (conjecture \ref{resto1}) whose validity would imply that these conditions are also necessary. 

Besides, it is known that minor inequalities completely describe the set covering polyhedron of matrices $C^3_n$. Finding new families of circulant matrices sharing this property would be a good topic for further research.

Finally, the computational complexity of the separation problem for general minor inequalities remains open.

\end{document}